\theoremstyle{plain} 
\newtheorem{theorem}{\indent\sc Theorem}[section]
\newtheorem{lemma}[theorem]{\indent\sc Lemma}
\newtheorem{corollary}[theorem]{\indent\sc Corollary}
\newtheorem{proposition}[theorem]{\indent\sc Proposition}
\theoremstyle{definition} 
\newtheorem{remark}[theorem]{\indent\sc Remark}
\def\bea{\begin{eqnarray*}}
\def\eea{\end{eqnarray*}}
\def\be{\begin{equation}}
\def\ee{\end{equation}}
\def\tr{\Delta}
\def\a{\alpha}
\begin{document}

\title{Total Scalar Curvature and Harmonic Curvature}
\author[G. Yun]{Gabjin Yun}
\author[J. Chang]{Jeongwook Chang}
\author[S. Hwang]{Seungsu Hwang} 
\keywords{total scalar curvature, critical point metric, harmonic curvature, Einstein metric}
\subjclass{58E11, 53C25}
\thanks{The first author was supported by the Basic Science Research Program through the National Research Foundation
of Korea(NRF) funded by the Ministry of Education, Science and Technology(2011-0007465), the second author by  the Ministry of Education, Science and Technology(2011-0005235), and the third author by  the Ministry of Education, Science and Technology(2011-0005211).}
\address{Gabjin Yun\\ Department of Mathematics, Myong Ji University\\
San 38-2 Namdong, Yongin, Gyeonggi 449-728, Korea}
\email{gabjin@mju.ac.kr}
\address{Jeongwookk Chang\\  Department of Mathematics Education, Dankook University\\
126 Jukjeon-dong, Suji-gu Yongin, Gyeonggi 448-701, Korea}
\address{Seungsu Hwang (Corresponding author)\\ Department of Mathematics, Chung-Ang University\\
84 HeukSeok-ro DongJak-gu, Seoul, Korea}
\email{seungsu@cau.ac.kr}

\maketitle
\begin{abstract}
On a compact $n$-dimensional manifold, it has been conjectured that a critical point metric of the total scalar curvature, restricted to the space of metrics with constant scalar curvature of unit volume, will be Einstein. This conjecture was proposed in 1984 by Besse, but has yet to be proved. In this paper,
we prove that if the manifold with the critical point metric has harmonic curvature, then it is isometric to a standard sphere.
\end{abstract}

\section{Introduction}
Let $M$ be an $n$-dimensional compact manifold and
${\mathcal M}_1$ be the set of all smooth Riemannian structures of unit volume on $M$.
The scalar curvature $s_g$ is a non-linear function of the metric $g$.
Its linearization at $g$ in the direction of the symmetric two-tensor $h$ is given by
$$s'_g(h)=-\tr_g tr h + \delta_g\delta_g h -g(h,r_g).$$
Here, $\tr_g$ is the negative Laplacian of $g$, $r_g$ is its Ricci tensor, and $
\delta_g$ is the metric dual of the map on the bundle of symmetric tensors induced by covariant differentiation.
The dual linearized operator $s_g'^*$ of $s_g'$ is given by
\begin{equation} \label{cpe2} s'^*_g(f)=D_g df-g \Delta_g f  - fr_g,
\end{equation}
where $D_gdf$ is the Hessian of $f$.

The following formulae are important for the analysis of the map
$${\mathcal S}: g \mapsto \int_M s_g dv_g $$
when ${\mathcal S}$ is  defined over ${\mathcal C}$, a subset of ${\mathcal M}_1$, consisting of constant scalar curvature metrics.
The Euler-Lagrange equation
of ${\mathcal S}$ restricted to ${\mathcal C}$ may be written as the
following critical point equation(CPE, hereafter):
\begin{equation}\label{cp0}
z_g=s'^*_g(f),
\end{equation}
where $z_g$ is the traceless Ricci tensor defined by $z_g=r_g-\frac {s_g}n$ and $f$ is a function on $M$ with a vanishing mean value.  It is obvious that a solution $g$ of the CPE is Einstein if $f$ is trivial. Therefore, from now on, we consider only the case when $(g,f)$ is a non-trivial solution of the CPE.

In 1987 Besse proposed the following conjecture in \cite{Besse1987}:
\vskip .5pc\noindent
{\bf Conjecture I.} {\it A solution of the critical point equation (\ref{cp0}) is Einstein.}
\vskip .5pc
There are some partial answers to Conjecture I.
Among them, Lafontaine showed in \cite{Lafontaine1983} that if a solution metric $g$
of the CPE is conformally flat and $\ker s_g'^*\neq 0$, then such a metric is Einstein.
Later  Bessi{\`e}res, Lafontaine, and Rozoy showed that if a solution metric $g$
of the $3$-dimensional CPE is conformally flat, Conjecture I is true without the need for a condition on $\ker s_g'^*$ (\cite{BLR2003}).
Recently we proved the following result:
\begin{theorem}  \cite{CYH2011}  \label{thm1} Let $(g,f)$ be a non-trivial solution of the CPE on
an $n$-dimensional compact Riemannian manifold $M$.
If the Ricci tensor of $g$ is parallel, then $(M,g)$ is isometric to a standard sphere.
\end{theorem}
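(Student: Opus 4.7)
The plan is to reduce the problem to showing $g$ is Einstein and then invoke Obata's theorem to conclude $(M,g)$ is a round sphere. Because $\nabla r_g = 0$, the traceless Ricci $z_g := r_g - \frac{s_g}{n} g$ is also parallel, so $|z_g|^2$ is a constant on $M$ and $\delta_g z_g = 0$. The heart of the proof will be to extract from the CPE an integral identity that forces this constant to vanish.

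First I would trace the CPE \eqref{cp0} using \eqref{cpe2}; since $z_g$ is trace-free, this produces a scalar identity of the form $\Delta_g f = -\frac{s_g}{n-1} f$, making $f$ a nontrivial eigenfunction of $\Delta_g$ and in particular forcing $s_g > 0$. Next I would take the pointwise inner product of the CPE with $z_g$. Using $g(z_g,g) = 0$ and $g(z_g, r_g) = |z_g|^2$, the equation collapses to
\begin{equation*}
(1+f)|z_g|^2 = g(z_g, D_g df).
\end{equation*}
Integrating over $M$, the left-hand side becomes $|z_g|^2 \cdot \mathrm{Vol}(M,g)$, since $|z_g|^2$ is constant and $\int_M f\, dv_g = 0$, while the right-hand side vanishes after one integration by parts because $\delta_g z_g = 0$. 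Hence $|z_g|^2 = 0$, i.e.\ $g$ is Einstein.

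Once $g$ is Einstein the CPE reduces, after substituting $r_g = \frac{s_g}{n}\, g$ together with the expression for $\Delta_g f$ found above, to the Obata-type Hessian equation $D_g df = -\frac{s_g}{n(n-1)}\, f\, g$. Because $f$ is non-constant and $s_g > 0$, Obata's classical theorem identifies $(M,g)$ with a standard sphere of the appropriate radius. The only delicate point in the argument is the divergence-free condition $\delta_g z_g = 0$, which is precisely what makes the integration by parts close; this step is what becomes much more subtle in the main result of the present paper, where only harmonic curvature (rather than parallel Ricci) is assumed and $|z_g|^2$ is no longer a priori constant on $M$.
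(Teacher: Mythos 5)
Your proof is correct, and there is nothing in this paper to compare it against: Theorem~\ref{thm1} is quoted from \cite{CYH2011} without proof. Your argument closes cleanly. Tracing the CPE gives $\Delta_g f=-\frac{s_g}{n-1}f$, and since $f$ is nonconstant with zero mean this forces $s_g>0$; pairing (\ref{cpe1}) with $z_g$ gives $(1+f)|z_g|^2=\langle D_gdf,z_g\rangle$; integrating by parts against the divergence-free, parallel tensor $z_g$ and using $\int_M f=0$ together with the constancy of $|z_g|^2$ yields $|z_g|^2\,\mathrm{Vol}(M)=0$, so $g$ is Einstein, and Obata's theorem finishes. One small correction to your closing remark: the condition $\delta_g z_g=0$ is not where the parallel Ricci hypothesis is really spent. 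It holds for \emph{every} CPE solution, since the scalar curvature is constant and the contracted Bianchi identity gives $\delta_g r_g=-\frac{1}{2}\,ds_g=0$; equivalently, the paper's identity (\ref{eqt4}) already shows $\int_M(1+f)|z_g|^2\,dv_g=0$ for an arbitrary CPE solution with harmonic (or no extra) curvature assumption. The only place parallel Ricci genuinely enters your argument is in making $|z_g|^2$ constant, which is what converts that universal integral identity into $|z_g|^2\,\mathrm{Vol}(M)=0$. That is precisely the step that fails in the harmonic-curvature setting of Theorem~\ref{th3}, where $(1+f)$ changes sign and $|z_g|^2$ need not be constant, and it is why the bulk of the paper is devoted to controlling the sign of $\alpha$ and the component ${\mathcal W}_N$ instead.
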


The geometric structure of an Einstein solution is
known to be simple;
 Obata showed that such a solution is isometric to a standard
$n$-sphere(\cite{Obata1962}).
For more details, we refer the reader to \cite{Besse1987} and \cite{HW2000}.

In this paper, we answer Conjecture I for harmonic curvature, which is a generalization of conformal flatness or parallel Ricci tensor conditions.
We say that $(M,g)$ has harmonic curvature if the divergence of the Riemmann curvature vanishes, i.e., $\delta R=0$.
It is well known that every manifold with a parallel Ricci tensor has harmonic curvature. However, there are examples of compact and noncompact Riemannian manifolds with $\delta R=0$ and $\nabla r_g\neq 0$ (see \cite{Derdzinski1982} and Theorem 5.2 in \cite{Gray1978}). By virtue of Theorem~\ref{thm1}, it is natural to ask whether a solution metric of the CPE which has harmonic curvature is Einstein. In the following we show that the answer to this question is affirmative.

\begin{theorem}\label{th3}
Let $(g,f)$ be a non-trivial solution of the CPE on
an $n$-dimensional compact Riemannian manifold $M$.
If $(M,g)$ has harmonic curvature, then $M$ is isometric to a standard sphere.
\end{theorem}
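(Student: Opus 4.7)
The plan is to use the harmonic curvature hypothesis --- equivalently, that the Ricci tensor is a Codazzi tensor --- together with the CPE to derive a pointwise algebraic constraint relating the Riemann tensor contracted with $\nabla f$ to $r_g$ and $\nabla f$, deduce from it that $\nabla f$ is a Ricci eigenvector wherever nonzero, and then close the argument by an integration by parts that reduces the problem to the parallel-Ricci case already handled by Theorem~\ref{thm1}.

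First I would trace the CPE $z_g = s'^*_g(f)$, using $\operatorname{tr}_g z_g = 0$, to obtain $\Delta_g f = -\frac{s_g f}{n-1}$, and rewrite the CPE equivalently as
\[
(1+f)\,z_g \;=\; D_g df + \frac{s_g f}{n(n-1)}\,g.
\]
Under harmonic curvature, the second Bianchi identity gives $\nabla_i r_{jk} = \nabla_j r_{ik}$ (in particular $s_g$ is constant), so $z_g$ is also a Codazzi tensor.

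Next I would differentiate the equivalent form of the CPE in the direction $\nabla_k$ and antisymmetrize in $(k,i)$. The Codazzi property kills the $(1+f)\bigl(\nabla_k r_{ij} - \nabla_i r_{kj}\bigr)$ contribution, while the commutator of Hessians $[\nabla_k,\nabla_i]\nabla_j f$ produces a Riemann curvature term, yielding the pointwise identity
\[
R_{kij}{}^l\,\nabla_l f \;=\; (\nabla_i f)\,r_{kj} - (\nabla_k f)\,r_{ij} + \frac{s_g}{n-1}\bigl(g_{ij}\,\nabla_k f - g_{kj}\,\nabla_i f\bigr).
\]
Contracting this identity against $\nabla^j f$, the antisymmetry of $R$ in its last two indices kills the left-hand side, and on the right the $g$-terms cancel, leaving $(\nabla_i f)\,r_g(\nabla f)_k = (\nabla_k f)\,r_g(\nabla f)_i$. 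Hence $r_g(\nabla f)$ is a scalar multiple of $\nabla f$ at each point where $\nabla f\neq 0$; equivalently, $\nabla f$ is a Ricci eigenvector there.

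The main obstacle is to upgrade this pointwise eigenvector condition to the rigidity $\nabla r_g = 0$. My plan is to combine the basic integral identity $\int_M (1+f)|z_g|^2\,dv_g = 0$, obtained by pairing the CPE with $z_g$ and using $\delta_g z_g = 0$ (valid since $s_g$ is constant), with secondary integral identities obtained by contracting the pointwise curvature formula above with $z_g^{kj}$ and $\nabla^k f\,\nabla^j f$ and integrating by parts, and with the Bochner formula for $|\nabla f|^2$ specialized via $\Delta_g f = -\frac{s_g f}{n-1}$. Substituting the equivalent CPE to express $D_g df$ as $(1+f)z_g - \frac{s_g f}{n(n-1)}g$ throughout, I expect these to assemble into an identity of the form $\int_M |\nabla z_g|^2\,dv_g = 0$, forcing $\nabla r_g = 0$; Theorem~\ref{thm1} then concludes that $(M,g)$ is isometric to a standard sphere. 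A subsidiary technical point is that the eigenvector relation is derived only on $\{\nabla f\neq 0\}$, so unique continuation for the elliptic CPE system will be invoked to propagate the identities across this zero set.
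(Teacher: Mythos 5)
Your first half is sound and coincides with the paper's own starting point: tracing the CPE to get $\Delta_g f=-\frac{s_g f}{n-1}$, rewriting it as $(1+f)z_g=D_gdf+\frac{s_gf}{n(n-1)}g$, differentiating, antisymmetrizing, and using the Codazzi property of $r_g$ to obtain the pointwise identity relating $\tilde{i}_{\nabla f}R$ to $r_g$ and $df$ (the paper's Lemma~\ref{lem2}), and then contracting with $\nabla f$ to conclude that $\nabla f$ is a Ricci eigenvector off its zero set (Lemmas~\ref{lem4} and~\ref{lem3}). Up to that point you have reproduced the correct algebraic skeleton.

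The gap is the entire second half. The sentence ``I expect these to assemble into an identity of the form $\int_M|\nabla z_g|^2\,dv_g=0$'' is a hope, not an argument, and it is precisely where the difficulty of the theorem is concentrated. There is good reason to doubt that the proposed contractions and integrations by parts close up so cleanly: $\int_M(1+f)|z_g|^2=0$ gives no sign information because $1+f$ changes sign, and harmonic curvature alone does \emph{not} force $\nabla r_g=0$ (Derdzinski's and Gray's examples, cited in the introduction, have $\delta R=0$ with $\nabla r_g\neq 0$), so any successful argument must exploit the CPE globally, not just pointwise. What the paper actually does after your stopping point is: decompose $z_g$ on the level sets of $f$ as $z_{ij}=-\frac{\alpha}{n-1}\delta_{ij}-\frac{n-2}{n-1}\mathcal{W}_{ij}$ (Lemma~\ref{lem5}); establish sign and constancy properties of $\alpha=z(N,N)$ along level sets, control the behavior of $G=|df|^2+\frac{s}{n(n-1)}f^2$ near the critical set of $f$ and near $\{f=-1\}$ by maximum principles (Lemmas~\ref{lemg}, \ref{lemfup0}, \ref{analytic}); prove $\mathcal{W}_N\equiv 0$ via the computation of $\delta\mathcal{W}_N$, its closedness (Frobenius), and an integration of $\langle\delta Ddf,\delta\mathcal{W}_N\rangle$ separately over $\{f>-1\}$ and $\{f<-1\}$; and finally show $\alpha\equiv 0$ by a superharmonicity argument. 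The conclusion is $z_g=0$ directly, followed by Obata's theorem --- not a reduction to the parallel-Ricci Theorem~\ref{thm1}. None of this delicate global analysis is present or foreshadowed in your plan, so the proposal as written does not constitute a proof.
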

This result is a good progress to solve Conjecture I. The remaining part is to show that the solution metric in Conjecture I is harmonic. 
As an immediate consequence of Theorem~\ref{th3}, we have the following generalization of Lafontain's result(\cite{Lafontaine1983}),
which does not need a condition on $\ker s_g'^*$.
\begin{corollary}
Let $(g,f)$ be a non-trivial solution of the CPE on
an $n$-dimensional compact Riemannian manifold $M$.
If $g$ is conformally flat, then $(M,g)$ is isometric to a standard sphere.
\end{corollary}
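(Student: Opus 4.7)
The plan is to derive the corollary as a direct consequence of Theorem~\ref{th3} by showing that a conformally flat CPE solution automatically has harmonic curvature. The crucial observation is that the CPE is only defined for metrics in $\mathcal{C}$, so $s_g$ is constant; this constant-scalar-curvature condition is exactly what is needed to upgrade conformal flatness to $\delta R = 0$.

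Concretely, I would bring in the Cotton tensor
\[
C_{ijk} = \nabla_i r_{jk} - \nabla_j r_{ik} - \frac{1}{2(n-1)}\bigl(g_{jk}\nabla_i s_g - g_{ik}\nabla_j s_g\bigr).
\]
Under the constant-scalar-curvature condition $\nabla s_g = 0$ this collapses to $C_{ijk} = \nabla_i r_{jk} - \nabla_j r_{ik}$. The contracted second Bianchi identity identifies this difference, up to sign, with the divergence $\delta R$ of the full Riemann tensor. Hence, whenever $s_g$ is constant, the vanishing of $C$ is equivalent to $\delta R = 0$, i.e., harmonic curvature.

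It then remains to verify that conformal flatness implies $C = 0$ in every relevant dimension $n \geq 3$. For $n = 3$ this is precisely the classical definition of conformal flatness (the Weyl tensor vanishes identically in dimension three, and the Cotton tensor plays the role of the conformal obstruction). For $n \geq 4$, conformal flatness is equivalent to the vanishing of the Weyl tensor $W$, and the standard divergence identity
\[
\nabla^i W_{ijkl} = -(n-3)\, C_{jkl}
\]
then forces $C = 0$. Combined with the preceding paragraph, this yields $\delta R = 0$, and Theorem~\ref{th3} delivers the conclusion that $(M,g)$ is isometric to a standard sphere.

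Because the proof is a one-step reduction to Theorem~\ref{th3}, there is no serious technical obstacle; the only point requiring care is the dimension split between the $n = 3$ Cotton-tensor definition of conformal flatness and the $n \geq 4$ Weyl-tensor characterization, which is handled uniformly by the divergence identity above. The constancy of $s_g$, which is built into the definition of the CPE, is what makes this reduction work and is precisely why Lafontaine's additional assumption $\ker s_g'^* \neq 0$ can be dropped.
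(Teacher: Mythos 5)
Your reduction is correct and is exactly the route the paper intends: the CPE forces $s_g$ to be constant, and conformal flatness then gives harmonic curvature (via the $n=3$ Cotton-tensor characterization and, for $n\geq 4$, the vanishing of $\mathcal{W}$ hence of $\delta\mathcal{W}$), so Theorem~\ref{th3} applies. The paper treats this as immediate from its Section~3 characterization of harmonic curvature, and your write-up simply supplies the same details explicitly (modulo a harmless normalization constant in the identity relating $\delta\mathcal{W}$ to the Cotton tensor).
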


This paper is organized as follows. In Section 2, we analyze the critical point equations. In Section 3, we formulate the critical point equations under  harmonic curvature conditions. In particular, we decompose the traceless Ricci tensor $z_g$ in Lemma~\ref{lem5} into tangential component of the level sets of $f$ and their orthogonal complements. Using this decomposition, we prove Theorem~\ref{wneinstein}, a special case of Theorem~\ref{th3}, which has the condition that one component ${\mathcal W}_N$ of the decomposition of $z_g$ vanishes. Finally, we will complete the proof of Theorem~\ref{th3} in Section 5 by showing that ${\mathcal W}_N$ is identically zero on $M$.

\section{Preliminaries}
Let $(M,g,f)$ be a non-trivial solution of the CPE.
Taking the trace of (\ref{cp0}) gives
\begin{equation}\label{eq18}
\tr_g f =-\frac {s_g}{n-1}f.
\end{equation}
Thus, by (\ref{cpe2}),  (\ref{cp0}) and  (\ref{eq18}), the CPE may be written as
\begin{equation} \label{cpe1}
(1+f)z_g=D_gdf+\frac {s_gf}{n(n-1)}g.
\end{equation}
Here the scalar curvature $s_g$ is constant from our stated assumptions.
\begin{proposition}\label{prop1-2}
Let $(g,f)$ be a non-trivial solution of (\ref{cpe1}), and $B=\{x\in M^n\,|\, f(x)=-1\}$. Then $B$ has measure zero.
\end{proposition}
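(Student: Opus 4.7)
The plan is to argue by contradiction. Suppose $B$ has positive $n$-dimensional Lebesgue measure (understood in local charts). By Lebesgue's density theorem, there exists a point $x_0 \in B$ that is a density point of $B$, i.e., $\lim_{r\to 0}\operatorname{vol}(B\cap B_r(x_0))/\operatorname{vol}(B_r(x_0)) = 1$. Since $f$ is smooth (as a classical solution of the CPE), I can Taylor-expand $f$ at $x_0$ in normal coordinates. The strategy is to extract from density at $x_0$ the strong pointwise conclusions $\na f(x_0)=0$ and $D_g df(x_0)=0$, and then plug into (\ref{cpe1}) to force a contradiction.

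First I would show $\na f(x_0)=0$: if $\na f(x_0)\neq 0$, the implicit function theorem realises $B$ as a smooth $(n-1)$-dimensional hypersurface in a neighbourhood of $x_0$, which has vanishing $n$-dimensional Lebesgue measure; this contradicts the density of $B$ at $x_0$. Next I would show $D_g df(x_0)=0$. Using $f(x_0)=-1$ and $\na f(x_0)=0$, Taylor's formula gives
\begin{equation*}
f(x)+1=\tfrac{1}{2}\,D_g df(x_0)(x-x_0,x-x_0)+o(|x-x_0|^2).
\end{equation*}
If $D_g df(x_0)$ were nonzero, there would exist a unit vector $v$ with $a:=D_g df(x_0)(v,v)\neq 0$, say $a>0$. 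By continuity the quadratic form stays bounded below by $a/2$ on a solid cone $C$ of positive aperture around $v$, so that $f(x)+1\geq \tfrac{a}{4}|x-x_0|^2>0$ for every $x\in C$ sufficiently close to $x_0$. Hence $C\cap B=\{x_0\}$ locally, and $C$ has positive lower density at $x_0$, again contradicting that $x_0$ is a density point of $B$.

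With $f(x_0)=-1$ and $D_g df(x_0)=0$, evaluation of (\ref{cpe1}) at $x_0$ yields
\begin{equation*}
0=-\frac{s_g}{n(n-1)}\,g(x_0),
\end{equation*}
which forces $s_g=0$. Then (\ref{eq18}) reduces to $\tr_g f=0$, so $f$ is harmonic on the compact manifold $M$; the maximum principle makes $f$ constant, and the vanishing mean value condition forces $f\equiv 0$. This contradicts the non-triviality of the solution $(g,f)$.

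The main obstacle is the Hessian-vanishing step: the gradient-vanishing step follows immediately from the implicit function theorem, but the second-order conclusion requires the small quadratic-cone argument sketched above. Once both vanishing statements at the density point are in hand, the reduction to $s_g=0$ and then to harmonicity of $f$ is purely mechanical.
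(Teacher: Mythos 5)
Your proof is correct, but it follows a genuinely different route from the paper's. The paper argues directly: it splits $B$ into the set $B'$ of critical points of $f$ lying in $B$ and the complement $B\setminus B'$. The latter is a union of hypersurfaces by the implicit function theorem, hence null; and at any $p\in B'$ equation (\ref{cpe1}) forces $D_gdf_p=\frac{s_g}{n(n-1)}g_p$, which is a definite quadratic form (for a nontrivial solution $s_g>0$, since (\ref{eq18}) exhibits $\frac{s_g}{n-1}$ as an eigenvalue of the positive Laplacian with nonconstant eigenfunction), so each such $p$ is a non-degenerate critical point of $f$, hence isolated, and $B'$ is finite. You instead argue by contradiction through a Lebesgue density point $x_0$ of $B$: your implicit-function step coincides with the paper's treatment of the regular part, but where the paper uses non-degeneracy of the Hessian to isolate the critical points, you use the quadratic-cone argument to force $D_gdf(x_0)=0$, then read off $s_g=0$ from (\ref{cpe1}) and contradict non-triviality via harmonicity of $f$. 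Both arguments are sound. The paper's version is shorter and yields the stronger structural conclusion that $B$ is a finite set together with a hypersurface; yours has the minor advantage that it never needs to know the sign of $s_g$ in advance --- it handles the degenerate possibility $s_g=0$ explicitly, a case that the paper's asserted inequality $\frac{s_g}{n(n-1)}g_p(\xi,\xi)>0$ silently excludes.
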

\begin{proof}
Let $B'$ be the set of critical points of $f$ in $B$. Then $B\setminus B'$ is a union of
hypersurfaces. For $p \in B'$,  from (\ref{cpe1}) we have $$Ddf_p(\xi,\xi)=\frac {s_g}{n(n-1)}g_p(\xi,\xi)>0$$ for any nonzero tangent vector $\xi$ in the tangent
space $T_pM$ at $p$. Thus $p$ is a non-degenerate critical point of $f$. Such non-degenerate critical points
are isolated, and thus the set $B'$ should be finite. Therefore $B=B'\cup (B\setminus B')$ has measure zero.
\end{proof}

Let ${\rm Crit}(f)=\{x\in M \,\vert\, df(x)=0\}$. For this set, we observe the following fact.
\begin{proposition} \label{prop01} The measure of  ${\rm Crit}(f)$ is zero.
\end{proposition}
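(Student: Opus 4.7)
The plan is to argue by contradiction: suppose $|{\rm Crit}(f)| > 0$ and derive $f \equiv 0$, contradicting non-triviality. The strategy combines a measure-theoretic density argument (which forces all derivatives of $f$ to vanish at almost every critical point) with Aronszajn's unique continuation theorem applied to the elliptic equation (\ref{eq18}). As a preliminary observation, I note that $s_g \neq 0$: otherwise (\ref{eq18}) would make $f$ harmonic on the compact $M$, hence constant, and the zero-mean condition would force $f \equiv 0$.

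For the main step, I would introduce the nested family
\[
A_k = \{\, p \in M : D^j f(p) = 0 \text{ for all } 1 \leq j \leq k \,\},
\]
so that $A_1 = {\rm Crit}(f)$. The key claim is that $|A_{k+1}| = |A_k|$ for every $k \geq 1$. To see it, each coordinate component of $D^k f$ is a smooth function vanishing on $A_k$; if its differential failed to vanish at a Lebesgue density point of $A_k$, then its zero set would locally be a smooth hypersurface, contradicting density one. So $D^{k+1} f$ vanishes at almost every point of $A_k$, giving the claim by induction. Since the $A_k$ are nested decreasing and $|A_1| > 0$ by hypothesis, $A_\infty := \bigcap_k A_k$ also has positive measure, and in particular is non-empty.

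Pick any $p \in A_\infty$. Then $D^j f(p) = 0$ for every $j \geq 1$, so in particular $\Delta_g f(p) = 0$; combined with (\ref{eq18}) and $s_g \neq 0$ this forces $f(p) = 0$. Thus $f$ is a smooth function vanishing to infinite order at $p$ that also satisfies the linear elliptic equation $\Delta_g f + \tfrac{s_g}{n-1} f = 0$, and Aronszajn's unique continuation theorem yields the contradiction $f \equiv 0$.

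The point requiring care is the density step: it rests on the elementary fact that the zero locus of a smooth function with non-vanishing differential is locally a hypersurface of measure zero, but the induction must be phrased so that vanishing of $D^k f$ on a positive-measure set propagates to vanishing of $D^{k+1} f$ on essentially the same set. Once that bookkeeping is in place, the invocation of Aronszajn's theorem is entirely standard.
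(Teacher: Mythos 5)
Your argument is correct, and it is genuinely different from the one in the paper. The paper handles ${\rm Crit}(f)\cap B$ (where $B=\{f=-1\}$) separately as a finite set of non-degenerate critical points via Proposition~\ref{prop1-2}, and on $M\setminus B$ it invokes Morrey's theorem to get real-analyticity of $g$ and $1+f$ from the coupled elliptic system (\ref{eq18})--(\ref{cpe1}) in harmonic coordinates, so that ${\rm Crit}(f)$ is contained in the zero set of a non-trivial analytic function. You instead use only the single scalar equation $\Delta_g f=-\frac{s_g}{n-1}f$, a Lebesgue-density induction to show $f$ vanishes to infinite order at some point of a hypothetical positive-measure critical set, and Aronszajn's unique continuation. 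This buys several things: you never need analyticity of the metric (so the argument would survive with merely smooth coefficients), you avoid Morrey's machinery entirely, and you do not need to excise the set $B$ or appeal to Proposition~\ref{prop1-2}. The paper's route buys more in exchange: real-analyticity of $g$ and $f$ is used again later (e.g.\ in the proof of Lemma~\ref{lemfup0}, where $z\equiv 0$ on an open set is propagated to a whole component), so the authors need Morrey's theorem anyway. Two small points to tidy up: the inductive step should record that at a point where $D^kf$ vanishes, the components of $D^{k+1}f$ reduce to coordinate derivatives of the components of $D^kf$ (the Christoffel correction terms drop out), which you essentially flagged; and both the claim $s_g\neq 0$ and the final application of unique continuation use connectedness of $M$, which is implicit throughout the paper as well.
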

\begin{proof}
We first claim that ${\rm Crit}(f)\cap B$ is finite. If $p\in \, {\rm Crit}(f) \cap B$,
we observe that as in the proof of Proposition~\ref{prop1-2} that $p$ is an isolated point, and thus ${\rm Crit}(f)\, \cap B$ is finite since $B$ is compact.
To prove that there is no open set in ${\rm Crit}(f)\,\cap (M\setminus B)$, it suffices to prove that $g$ and $f$ are analytic in $M\setminus B$.
Since the coefficients, in local harmonic coordinates, of the coupled elliptic system (\ref{eq18}) and (\ref{cpe1})  are
real-analytic, it follows from Theorem 6.6.1 in \cite{Morrey1966} that $g$ and $h=1+f$ are in fact real-analytic where $f\neq -1$.
\end{proof}

\section{Harmonic Curvature}
In this section we study harmonic curvature and its relation to the CPE.
For an $n$-dimensional Riemannian manifold $(M,g)$, the second Bianchi identity yields the well-known divergence formulae $\delta r_g =-\frac 12 d{s_g}$ and
\begin{equation}\label{eq32}\delta R=-d^D r_g,\end{equation}
where $d^D$ is the first-order differential operator from $C^{\infty}(S^2M)$ into $C^{\infty}(\Lambda^2 T^*M\otimes T^*M)$ defined by
$$ d^D \omega (x,y,z)=D_x \omega (y,z) -D_y \omega (x,z)$$
for a two form $\omega$.
Consequently
$$\delta {\mathcal W}= -\frac {n-3}{n-2} \, d^D\left(r_g-\frac {s_g}{2(n-1)}g\right), $$
where ${\mathcal W}$ is its Weyl conformal curvature tensor.

We say that $(M,g)$ has harmonic curvature if the divergence of the Riemmann curvature vanishes, i.e., $\delta R=0$.
When $n=3$, $(M,g)$ is conformally flat and has constant scalar curvature.
When $n\geq 4$, $(M,g)$ has harmonic Weyl tensor ($\delta {\mathcal W} =0$) and constant scalar curvature.
Moreover, it is equivalent to $d^D r_g=0$, in other words, $r_g$ is a Codazzi tensor.

The product of a $1$-form $\beta$ and a symmetric two form $\eta$ can be defined by $\beta \wedge \eta (x,y,z)=\beta(x)\eta (y,z)-\beta(y)\eta(x,z)$.
Then we have the following equation.
\begin{lemma} \label{lem2} Let $(M,g)$ have harmonic curvature and $(g,f)$ be a non-trivial solution of the CPE. Then
\begin{equation} \label{eqnn02}
(n-2)\,\tilde{i}_{\nabla f }{\mathcal W} = (n-1) df \wedge z + i_{\nabla f }z\wedge g. \end{equation}
Here, we define $\tilde{i}$  by $\tilde{i}_{\xi} \omega (X,Y,Z)=\omega(X,Y,Z,\xi)$ for a $4$-tensor $\omega$, and $i$ by $i_{\xi}z(X)=z(\xi, X)$.
\end{lemma}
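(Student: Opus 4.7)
The plan is to differentiate the CPE \eqref{cpe1} once, skew-symmetrize in the first two slots, exploit the Codazzi property of $z_g$ forced by harmonic curvature, and then apply the Ricci identity to convert the alternated Hessian into a contraction of the Riemann tensor with $\nabla f$. Substituting the standard decomposition of $R$ into its Weyl, traceless Ricci, and scalar parts will then isolate $\tilde{i}_{\nabla f}\mathcal{W}$.

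First I would note that since $s_g$ is constant, the condition $\delta R=0$ is equivalent to $d^D r_g=0$, which in turn is equivalent to $d^D z_g=0$; in other words, $z_g$ is a Codazzi tensor. Writing the CPE as in \eqref{cpe1} and applying $D_X$ gives
\[
df(X)\,z_g(Y,Z)+(1+f)\,D_X z_g(Y,Z)=D_X(D df)(Y,Z)+\frac{s_g\,df(X)}{n(n-1)}\,g(Y,Z).
\]
Alternating in $X$ and $Y$, the Codazzi condition annihilates the $D_X z_g$ contribution on the left, and the Ricci identity applied to the $1$-form $df$ rewrites the alternation of $D(Ddf)$ as a contraction of $R$ with $\nabla f$, producing a relation of the schematic form
\[
(df\wedge z_g)(X,Y,Z)=\pm R(X,Y,Z,\nabla f)+\frac{s_g}{n(n-1)}(df\wedge g)(X,Y,Z).
\]

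The next step is to substitute the standard Weyl decomposition of $R$, namely $R$ equals $\mathcal{W}$ plus a Kulkarni--Nomizu product of $z_g$ and $g$ with coefficient $\frac{1}{n-2}$ plus a Kulkarni--Nomizu product of $g$ with itself carrying coefficient $\frac{s_g}{2n(n-1)}$. Contracting with $\nabla f$ in the last slot, the Weyl part supplies $\tilde{i}_{\nabla f}\mathcal{W}$, the $z_g$ piece produces exactly $\frac{1}{n-2}\bigl[(df\wedge z_g)+(i_{\nabla f}z_g)\wedge g\bigr]$, and the purely scalar piece produces a multiple of $df\wedge g$ that cancels the residual $\frac{s_g}{n(n-1)}\,df\wedge g$ carried over from the CPE. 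Collecting the $(df\wedge z_g)$ terms on one side leaves the coefficient $\frac{n-1}{n-2}$, and clearing denominators yields \eqref{eqnn02}.

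The only real obstacle is bookkeeping: one must pin down the sign conventions for the Ricci identity and for the Kulkarni--Nomizu product so that the scalar curvature contributions genuinely cancel and the coefficient of $df\wedge z_g$ comes out as $n-1$ rather than, say, $-(n-3)$. Provided the conventions are tracked carefully, no ingredient beyond the Codazzi property of $z_g$ and the Ricci identity applied to $df$ is required.
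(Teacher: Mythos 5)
Your proposal is correct and follows essentially the same route as the paper: differentiate \eqref{cpe1}, alternate in the first two arguments (where harmonic curvature, i.e.\ the Codazzi property of $r_g$ or equivalently of $z_g$, kills the derivative of the traceless Ricci term), apply the Ricci identity to $df$, and substitute the Weyl decomposition of $R$ contracted with $\nabla f$; the scalar pieces cancel and the coefficients $\frac{n-1}{n-2}$ and $\frac{1}{n-2}$ emerge exactly as you describe, yielding \eqref{eqnn02}.
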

\begin{proof}
From (\ref{cpe1}) we have
\bea (1+f)d^Dr_g (X,Y,Z)&=& \langle R(\nabla f,Z)Y,X\rangle +\left(\frac {s_g}{n-1}df\wedge g -df \wedge r_g\right)(X,Y,Z).
\eea
Now from
\bea {\mathcal W}(X,Y,Z,W)&=& R(X,Y,Z,W)-\frac 1{n-2}(g(X,Z)r(Y,W)+g(Y,W)r(X,Z)\\
& &-g(Y,Z)r(X,W)-g(X,W)r(Y,Z))\\
& &+\frac {s_g}{(n-1)(n-2)}(g(X,Z)g(Y,W)-g(Y,Z)g(X,W)),
\eea
\[ \tilde{i}_{\nabla f}R= \tilde{i}_{\nabla f}{\mathcal W} -\frac 1{n-2}i_{\nabla f}r_g \wedge g  +\frac {s_g}{(n-1)(n-2)}df\wedge g -\frac 1{n-2}df\wedge r_g. \]
We can obtain
\bea (1+f)d^Dr_g &=& \tilde{i}_{\nabla f}{\mathcal W} -\frac 1{n-2}i_{\nabla f}r_g \wedge g  +\frac {s_g}{(n-2)}df\wedge g -\frac {n-1}{n-2}df\wedge r_g\\
&=& \tilde{i}_{\nabla f}{\mathcal W} -\frac {n-1}{n-2}df\wedge z -\frac 1{n-2}i_{\nabla f}z\wedge g .
\eea
Then equation (\ref{eqnn02}) follows from the harmonicity of the metric $g$.
\end{proof}
Throughout the rest of this paper,
{\it we assume that $(M,g)$ has harmonic curvature and $(g,f)$ is a non-trivial solution of the CPE.}
As immediate consequences of (\ref{eqnn02}), we have the following two results.
\begin{lemma} \label{lem4} For each regular value $c$ of $f$ and a tangent vector $X$ to $f^{-1}(c)$,
\be \label{eqnn3} z(X,\nabla f)=0 \ee
on $f^{-1}(c)$.
\end{lemma}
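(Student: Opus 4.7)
The plan is to extract the conclusion directly from equation~(\ref{eqnn02}) by exploiting the antisymmetry of the Weyl tensor in its last two slots. Concretely, I would evaluate~(\ref{eqnn02}) on the triple $(X, Y, \nabla f)$ for arbitrary tangent vectors $X, Y$.

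First I would compute the left-hand side: $(n-2)\tilde{i}_{\nabla f}\mathcal{W}(X,Y,\nabla f) = (n-2)\mathcal{W}(X,Y,\nabla f,\nabla f)$, which vanishes since $\mathcal{W}$ is skew in its third and fourth arguments. Next I would expand the right-hand side using the definitions of the wedge product and of $i_{\nabla f}z$, then use the symmetry of $z$ to cancel terms. Writing $z(\nabla f, X)=z(X,\nabla f)$ and similarly for $Y$, the expression collapses to
\begin{equation*}
0 = (n-2)\bigl[df(X)\, z(Y,\nabla f) - df(Y)\, z(X,\nabla f)\bigr].
\end{equation*}
Hence $df(X)\, z(Y,\nabla f) = df(Y)\, z(X,\nabla f)$ for all vector fields $X,Y$ (assuming $n\geq 3$).

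Finally, to derive~(\ref{eqnn3}), I would specialize this identity to $Y = \nabla f$ and take $X$ tangent to the regular level set $f^{-1}(c)$. Then $df(X)=0$ while $df(\nabla f)=|\nabla f|^2>0$ on $f^{-1}(c)$, so the identity forces $z(X,\nabla f)=0$, as desired.

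I do not anticipate a serious obstacle: the only slightly delicate point is keeping the bookkeeping straight when expanding $i_{\nabla f}z \wedge g$ and $df\wedge z$, and remembering that the coefficient arithmetic on the right-hand side produces the factor $(n-2)$ (which is nonzero since Lemma~\ref{lem2} was stated under the harmonic curvature hypothesis and the $n=3$ case, being conformally flat, is handled separately in the paper's framework). Everything else is a symmetry argument.
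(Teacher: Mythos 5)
Your proposal is correct and is essentially the paper's own argument: the paper plugs the triple $(X,\nabla f,\nabla f)$ directly into (\ref{eqnn02}), uses the skew-symmetry of ${\mathcal W}$ in its last two slots to kill the left-hand side, and reads off $0=(2-n)z(X,\nabla f)|df|^2$; your route through the general identity $df(X)z(Y,\nabla f)=df(Y)z(X,\nabla f)$ followed by the specialization $Y=\nabla f$ is the same computation with one extra intermediate step. (One minor quibble: the factor $n-2$ is nonzero already for $n=3$, so no separate treatment of that case is needed here.)
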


\begin{proof}
Note that $X$ is orthogonal to $\nabla f$.
Applying the triple $(X,\nabla f, \nabla f)$ into (\ref{eqnn02}) gives
$$ 0=(n-2){\mathcal W}(X,\nabla f,\nabla f, \nabla f)= (2-n)z(X,\nabla f)|df|^2, $$
since $df(X)=0$. This implies that (\ref{eqnn3}) is true.
\end{proof}

\begin{lemma} \label{lem3} On $M$ we have
\begin{equation} |df|^2 i_{\nabla f}z=z(\nabla f, \nabla f) \, df. \label{eqt1}\end{equation}
\end{lemma}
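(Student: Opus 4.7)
The plan is to derive (\ref{eqt1}) as a direct consequence of Lemma~\ref{lem4}, first on the open set where $df\neq 0$ and then extending to all of $M$ by continuity. Lemma~\ref{lem4} says precisely that the $1$-form $i_{\nabla f}z$ annihilates every tangent vector to a regular level set of $f$, i.e.\ every vector orthogonal to $\nabla f$. Since at a regular point the hyperplane $(\nabla f)^\perp$ together with $\nabla f$ spans the tangent space, this forces $i_{\nabla f}z$ to be a scalar multiple of $df$ at that point, and (\ref{eqt1}) amounts to identifying the scalar.

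More precisely, let $p\in M$ be a regular point of $f$, so $|df|(p)>0$. For any tangent vector $Y\in T_pM$, decompose
\begin{equation*}
Y=\frac{df(Y)}{|df|^2}\,\nabla f + X,\qquad X:=Y-\frac{df(Y)}{|df|^2}\,\nabla f,
\end{equation*}
so that $df(X)=0$, i.e.\ $X$ is tangent to the level set through $p$. Applying Lemma~\ref{lem4} gives $z(X,\nabla f)=0$, and therefore
\begin{equation*}
z(Y,\nabla f)=\frac{df(Y)}{|df|^2}\,z(\nabla f,\nabla f).
\end{equation*}
Multiplying by $|df|^2$ and using that this holds for arbitrary $Y$ yields the identity of $1$-forms $|df|^2\,i_{\nabla f}z = z(\nabla f,\nabla f)\,df$ on the open set $M\setminus \mathrm{Crit}(f)$.

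Finally, to obtain the identity on all of $M$, observe that at any critical point $p\in\mathrm{Crit}(f)$ we have $\nabla f(p)=0$, so both sides vanish identically there. Since both sides define continuous (indeed smooth) $1$-forms on $M$ and the identity holds on a dense set (by Proposition~\ref{prop01}, $\mathrm{Crit}(f)$ has measure zero and in particular $M\setminus\mathrm{Crit}(f)$ is dense), the identity extends to all of $M$. There is no real obstacle in this argument; the content is entirely packaged in Lemma~\ref{lem4}, and the only mild subtlety is handling the critical set, which is trivial because $\nabla f$ vanishes there.
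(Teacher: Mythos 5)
Your proof is correct, but it takes a different route from the paper's. The paper proves (\ref{eqt1}) by going back to the master identity (\ref{eqnn02}) of Lemma~\ref{lem2} and contracting it with the triple $(\nabla f, Y, \nabla f)$ for an arbitrary vector $Y$: the Weyl term drops out by antisymmetry, and collecting the wedge terms gives $|df|^2 z(Y,\nabla f)=df(Y)\,z(\nabla f,\nabla f)$ directly. You instead never touch the curvature identity again: you take the already-established tangential vanishing $z(X,\nabla f)=0$ of Lemma~\ref{lem4}, decompose an arbitrary $Y$ orthogonally into its $\nabla f$-component and a level-set-tangential component, and read off (\ref{eqt1}) by bilinearity. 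Your argument makes transparent that, at points where $df\neq 0$, Lemma~\ref{lem3} carries no information beyond Lemma~\ref{lem4}, and your observation that both sides vanish identically on $\mathrm{Crit}(f)$ disposes of the critical set without any density argument (the appeal to Proposition~\ref{prop01} is superfluous). The only caveat, and it is a cosmetic one, is that Lemma~\ref{lem4} is stated for level sets of \emph{regular values}, whereas you invoke it at an arbitrary point where $df\neq 0$; since the proof of Lemma~\ref{lem4} is purely pointwise (it only uses $df(X)=0$ and $|df|\neq 0$ at the point in question), this extension is legitimate, but it would be worth a sentence acknowledging it.
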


\begin{proof} From $d^Dr_g=0$ and (\ref{eqnn02}),  applying the triple $(\nabla f, Y,\nabla f)$ into (\ref{eqnn02}) with an arbitrary vector $Y$ gives
\bea 0&=&(n-2){\mathcal W}(\nabla f, Y, \nabla f,\nabla f)\\
&=& (n-1)(|df |^2 z(Y,\nabla f) -df (Y) z({\nabla f},\nabla f))\\
& & +z(\nabla f, \nabla f)df(Y)-z(\nabla f, Y)|df|^2.
\eea
Therefore we obtain
$$
|df|^2 z(Y,\nabla f)= df(Y) z(\nabla f,\nabla f).
$$
Thus equation (\ref{eqt1}) holds.
\end{proof}
 On $M\setminus {\rm Crit}(f)$, we can define $N:=df/|df|$ and $\alpha=z(N,N)$. Then we can rewrite the equation (\ref{eqt1}) as
\be i_{\nabla f}z=\alpha \, df .\label{eqnn1-1}\ee
Taking the divergence of (\ref{eqnn1-1}) gives
\begin{equation}\label{eqt2}\delta (i_{\nabla f}z)=
-\langle d\alpha, df\rangle -\alpha \tr f. \end{equation}
On the other hand, if $\{E_i\}_{i=1,...,n}$ is a local orthonormal basis of vector fields,
\begin{eqnarray*}
\delta (r_g(d\varphi, \cdot))&=& -\sum_i (D_{E_i}r_g(d\varphi))(E_i)=-\sum_i E_i(r(d\varphi,E_i))\\
&=&-\langle Dd\varphi, r_g\rangle +\delta r_g (d\varphi)=-\langle Dd\varphi, r_g\rangle-\frac 12 \langle d{s_g}, d\varphi\rangle
\end{eqnarray*}
for any smooth function $\varphi$.
Thus, since ${s_g}$ is constant, from (\ref{cpe1}) we have
\begin{equation}\label{eqt4} \delta (i_{\nabla f}z)=
\delta(r(df,\cdot))+\frac {s_g}n \tr f = -\langle Ddf, r\rangle -\frac {{s_g}^2}{n(n-1)} f =-(1+f)|z|^2.
\end{equation}
Therefore, by (\ref{eqt2}) and (\ref{eqt4}) we obtain
\begin{equation}\label{eqt3}
(1+f)|z|^2= -\frac {s_g}{n-1}\,\alpha  f+\langle d\alpha, df\rangle.
\end{equation}

On the other hand,
by Lemma~\ref{lem2} and Lemma~\ref{lem3}  we have
$$ -df \wedge z = \frac {\alpha}{n-1}\, df\wedge g -\frac {n-2}{n-1} \, \tilde{i}_{\nabla f }{\mathcal W}. $$
Thus, applying the triple $(E_i, \nabla f, E_j)$ into the above equation gives the following orthogonal decomposition of $z$.
\begin{lemma}\label{lem5} Let $\{ {E_i\}}_{i=1}^n $ be a local orthonormal frame field on $M\setminus {\rm Crit}(f)$ with $E_n=N=\nabla f/|\nabla f|$. Then we have
\be \label{zdecomp1} z_{ij}=-\frac {\alpha}{n-1}\delta_{ij}-\frac {n-2}{n-1}{{\mathcal W}}_{ij},\ee
for $i,j=1,...,n-1$, where ${{\mathcal W}}_{ij}= {\mathcal W}_N(E_i,E_j)\equiv {\mathcal W}(E_i,N,E_j,N)$.
Thus
\be \label{zdecomp2} |z|^2 = \frac{n}{n-1} \alpha^2 + \left(\frac{n-2}{n-1}\right)^2|{\mathcal W}_N|^2.\ee
\end{lemma}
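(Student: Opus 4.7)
The plan is to exploit the identity
\[
-df\wedge z \;=\; \frac{\alpha}{n-1}\,df\wedge g \;-\;\frac{n-2}{n-1}\,\tilde{i}_{\nabla f}\mathcal{W},
\]
which the author has already extracted from Lemmas~\ref{lem2} and \ref{lem3} immediately above the statement. The strategy is simply to plug the triple $(E_i,\nabla f,E_j)$ with $1\le i,j\le n-1$ into this identity and read off the coefficient of $z_{ij}$.

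First I would compute each of the three terms on that triple. Since $E_i\perp\nabla f$ for $i<n$, the definition $\beta\wedge\eta(x,y,z)=\beta(x)\eta(y,z)-\beta(y)\eta(x,z)$ gives $df\wedge z(E_i,\nabla f,E_j)=-|\nabla f|^2 z(E_i,E_j)$ and $df\wedge g(E_i,\nabla f,E_j)=-|\nabla f|^2\delta_{ij}$. On the Weyl side, $\tilde{i}_{\nabla f}\mathcal{W}(E_i,\nabla f,E_j)=\mathcal{W}(E_i,\nabla f,E_j,\nabla f)=|\nabla f|^2\mathcal{W}(E_i,N,E_j,N)=|\nabla f|^2\mathcal{W}_{ij}$. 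Substituting and cancelling the nonzero factor $|\nabla f|^2$ (which is valid on $M\setminus\mathrm{Crit}(f)$) yields the stated formula (\ref{zdecomp1}).

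To obtain (\ref{zdecomp2}) I would split $|z|^2=\sum_{i,j=1}^n z_{ij}^2$ according to whether the indices are $n$ or less. Lemma~\ref{lem3} (in the form $i_{\nabla f}z=\alpha\,df$) forces $z(N,E_i)=0$ for $i<n$ and $z(N,N)=\alpha$, so the off-diagonal block contributes nothing and $z_{nn}^2=\alpha^2$. For the tangential block, squaring and summing the expression in (\ref{zdecomp1}) produces three terms. The cross term is $\tfrac{2\alpha(n-2)}{(n-1)^2}\sum_{i=1}^{n-1}\mathcal{W}_{ii}$, which vanishes because the Weyl tensor is trace-free: $\sum_{i=1}^{n}\mathcal{W}(E_i,N,E_i,N)=0$ and the $i=n$ term is zero by the skew-symmetry of $\mathcal{W}$ in its first two slots. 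Collecting the diagonal terms gives $\tfrac{\alpha^2}{n-1}+\left(\tfrac{n-2}{n-1}\right)^2|\mathcal{W}_N|^2$, and adding $\alpha^2$ from the $(n,n)$ entry produces the claimed coefficient $\tfrac{n}{n-1}\alpha^2$.

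There is no real obstacle here; the work is entirely bookkeeping once one recognizes that the rank-one relation $i_{\nabla f}z=\alpha\,df$ kills the $n$-th row of $z$ outside the diagonal, and that the trace-freeness of $\mathcal{W}_N$ kills the cross term in $|z|^2$. The only place one must be a bit careful is to restrict to $M\setminus\mathrm{Crit}(f)$ so that the frame with $E_n=N$ and the normalization $\alpha=z(N,N)$ are well defined.
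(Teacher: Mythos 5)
Your proof is correct and follows exactly the paper's route: the paper also obtains (\ref{zdecomp1}) by evaluating the identity $-df\wedge z=\frac{\alpha}{n-1}df\wedge g-\frac{n-2}{n-1}\tilde{i}_{\nabla f}\mathcal{W}$ on the triple $(E_i,\nabla f,E_j)$, and deduces (\ref{zdecomp2}) from $z(N,E_i)=0$, $z(N,N)=\alpha$, and $\mathrm{tr}\,\mathcal{W}_N=0$. Your write-up just makes the bookkeeping explicit.
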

\noindent Note that equation
(\ref{zdecomp2}) follows from ${\rm tr} \, {\mathcal W}_N=0$.
 For a real number $c$, we denote $L_{c}$ as a connected component of $f^{-1}(c)$.
The following lemma implies that the functions $|z|^2$ and $|{\mathcal W}_N|^2$ are constant on $L_{c}$ due to (\ref{eqt3}) and (\ref{zdecomp2}) since $\alpha, |df|, \langle d\alpha,df\rangle$ are constant on $L_{c}$.

\begin{lemma}\label{lem2011-4-5-1}
For each regular value $c$ of $f$, $\alpha$, $|df|$, and $\langle d\alpha, df\rangle$ are constant on $L_{c}$.
\end{lemma}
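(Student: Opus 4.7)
The plan is to establish the three constancy claims in the stated order, using the CPE (\ref{cpe1}), the pointwise identities $z(X,\nabla f) = 0$ for $X \perp \nabla f$ from Lemma~\ref{lem4} and $i_{\nabla f} z = \alpha\, df$ from (\ref{eqnn1-1}), together with the Codazzi property $d^D z = 0$ (which follows from $\delta R = 0$ and the constancy of $s_g$, so that $d^D r_g = 0$). Throughout, I fix $p \in L_c$ and a tangent vector $X \in T_p L_c$, and extend $X$ locally near $p$ to a vector field with $df(X) \equiv 0$, which is possible because $p$ is a regular point of $f$.

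For $|df|^2$ I differentiate directly: $X(|df|^2) = 2\, Ddf(X, \nabla f)$, and substituting the CPE together with $z(X,\nabla f) = 0$ and $df(X) = 0$ gives $X(|df|^2) = 0$. For $\alpha$, since $|df|$ is now constant on $L_c$, it suffices to show $z(\nabla f,\nabla f)$ is constant; I split
$$X\bigl(z(\nabla f,\nabla f)\bigr) = (D_X z)(\nabla f,\nabla f) + 2\,z(D_X \nabla f, \nabla f).$$
The second term equals $\alpha\, g(D_X \nabla f,\nabla f) = \tfrac{\alpha}{2}X(|df|^2) = 0$ by (\ref{eqnn1-1}) and the previous step. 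For the first, Codazzi converts it to $(D_{\nabla f} z)(X,\nabla f)$, and expanding yields $\nabla f\bigl(z(X,\nabla f)\bigr) - z(D_{\nabla f}X, \nabla f) - z(X, D_{\nabla f}\nabla f)$. The first summand vanishes because (\ref{eqnn1-1}) gives $z(X,\nabla f) = \alpha\, df(X) \equiv 0$ on the extension. For the other two, feeding (\ref{eqnn1-1}) back into the CPE evaluated at $(\nabla f, Y)$ produces
$$D_{\nabla f}\nabla f = \Bigl[(1+f)\alpha - \frac{s_g f}{n(n-1)}\Bigr]\nabla f,$$
so each remaining term is a multiple of $df(X) = 0$.

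Finally, the argument above applies verbatim at every regular level of $f$ near $L_c$, so $\alpha$ is constant on each connected piece of every nearby level set. In a tubular neighborhood of $L_c$ this forces $\alpha = \phi(f)$ for a smooth function $\phi$, whence $d\alpha = \phi'(f)\, df$ and $\langle d\alpha, df\rangle = \phi'(f)|df|^2$, which is constant on $L_c$ because both $f$ and $|df|$ are. The principal obstacle is the middle step: the four boundary terms collapse only after invoking Codazzi to swap the derivatives and using the explicit form of $D_{\nabla f}\nabla f$ extracted from the CPE along the gradient direction.
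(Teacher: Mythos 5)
Your proof is correct and follows essentially the same route as the paper's: the constancy of $|df|$ and of $\alpha$ both come from the CPE together with the Codazzi property $d^D z=0$ and the fact that $\nabla f$ is an eigendirection of $Ddf$ (the paper phrases this as $D_NN=0$, you as $D_{\nabla f}\nabla f=\lambda\,\nabla f$). The only divergence is in the third claim, where the paper commutes the Hessian of $\alpha$ via $\langle D_Nd\alpha, X\rangle=\langle D_Xd\alpha, N\rangle$ while you write $\alpha=\phi(f)$ in a tubular neighborhood of $L_c$ so that $\langle d\alpha,df\rangle=\phi'(f)|df|^2$; both are valid and equally short.
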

\begin{proof}  From (\ref{cpe1}) and Lemma~\ref{lem4}, it is easy to see that $D_NN=0$;
$$D_NN=\sum_{i=1}^{n-1}\langle D_NN,E_i\rangle E_i= \frac 1{|df|}\sum_{i=1}^{n-1}\langle D_Ndf,E_i\rangle E_i =\frac {1+f}{|df|}\sum_{i=1}^{n-1}z(N,E_i) E_i=0.$$
Now, since $d^Dr_g = 0$ and the scalar curvature $s_g$ is constant, $d^D z = 0$. Thus for a tangent vector
$X$ to $L_c$, from (\ref{eqnn3}) and $D_N N = 0$ we obtain
\bea
X(\a) &=& X(z(N, N)) = D_X z(N, N) + 2z(D_X N, N)\\
&=&
D_Nz(X, N) = N(z(X, N)) - z(D_N X, N) - z(X, D_N N)\\
&=&
- \langle D_N X, N\rangle \, \a = 0.
\eea
This implies the constancy of $\alpha$ on $f^{-1}(c)$. In particular, $d\alpha =N(\alpha)N$. The second part follows easily; for a tangent vector $\xi$ to the level set of $f$,
$$\xi |df|^2=2\langle D_{\xi}df, df\rangle =2(1+f)z(\xi,df)-\frac {2sf}{n(n-1)}\xi(f)=0$$
by Lemma~\ref{lem4}. Also, since $\alpha$ is constant on each level sets of $f$,
the third statement follows from
\bea 0&=& NX(\alpha)=N\langle d\alpha, X\rangle = \langle D_Nd\alpha, X\rangle\\
&=&\langle D_X d\alpha, N\rangle =X\langle d\alpha, N\rangle,
\eea
where we used Lemma~\ref{lem4} in the last equality.
\end{proof}
\par
It is clear that $\alpha$ and $d\alpha$ are defined on $M\setminus \mbox{\rm Crit}(f)$.
For the rest of this section, we discuss the extension of $\alpha$ and $d\alpha$ onto all of $M$.
Since we have $|\alpha| \le |z|$, $\alpha$ can even be defined on the measure zero set ${\rm {Crit}}(f)$. In particular, if $z(x_0)=0$ for $x_0\in \mbox{\rm Crit}(f)$, $\alpha(x_0)$ can be continuously defined to be zero since $\lim_{x\to x_0} |\alpha|\leq \lim_{x\to x_0}|z| =0$.

Let $G=|df|^2+\frac s{n(n-1)}f^2$. It is easy to see that $G$ is continuous on all of $M$. For the function $G$, we have the following result.
\begin{lemma}  \label{lemg}For the function $G=|df|^2+\frac s{n(n-1)}f^2$ and the conformal metric  $\tilde {g}= h^{-2}g$ with $h=1+f$, we have
\be \label{glap}\tilde{\tr} G+\frac{(n-3)}h\, \tilde{g}(dG,df)= 2h^4|z|^2,\ee
where $\tilde{\tr}$ is the Laplacian  of $\tilde{g}$.
In particular, if $x_0\in \mbox{\rm Crit}(f)$ and $z(x_0)\neq 0$, then $G$ has its local minimum at $x_0$.
\end{lemma}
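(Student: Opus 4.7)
The plan is a direct Bochner-type calculation using the CPE to compute $\tr G$, followed by a conformal change to recover (\ref{glap}), with the local-minimum statement read off by evaluating the resulting identity at $x_0$.

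I would first compute $\tr G = \tr|df|^2 + \frac{s}{n(n-1)}\tr f^2$. Starting from the Bochner identity $\tfrac12\tr|df|^2 = |Ddf|^2 + g(d\tr f, df) + r_g(\nabla f, \nabla f)$, I would substitute: the CPE (\ref{cpe1}) (which, via $\mathrm{tr}_g z = 0$, yields $|Ddf|^2 = h^2|z|^2 + \frac{s^2 f^2}{n(n-1)^2}$ with $h := 1+f$); equation (\ref{eq18}) (which gives $g(d\tr f, df) = -\frac{s}{n-1}|df|^2$ and $\tr f^2 = 2|df|^2 - \frac{2s}{n-1}f^2$); and the decomposition $r_g = z + \frac{s}{n} g$. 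On assembling $\tr G$, both the $f^2$-family and the $|df|^2$-family of terms cancel identically, leaving
\[
\tr G \;=\; 2h^2 |z|^2 + 2 z(\nabla f, \nabla f).
\]

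Next I would eliminate $z(\nabla f,\nabla f)$ by computing $dG = 2 Ddf(\nabla f,\cdot) + \frac{2sf}{n(n-1)} df$ directly and substituting the CPE, which produces the clean identity $\langle dG, \nabla f\rangle = 2h\, z(\nabla f,\nabla f)$. Thus
\[
\tr G - \tfrac{1}{h}\langle dG, \nabla f\rangle \;=\; 2h^2|z|^2.
\]
Applying the standard conformal-change formula $\tilde{\tr} u = h^2 \tr u - (n-2) h\, g(\nabla h, \nabla u)$ under $\tilde g = h^{-2} g$, together with $\nabla h = \nabla f$ and $\tilde g(dG, df) = h^2\, g(dG, df)$, then converts this into (\ref{glap}). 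The coefficient $n-3$ in (\ref{glap}) emerges as $(n-2) - 1$, from combining the conformal shift $-(n-2)h$ with the extra $+h$ obtained when the term $\tfrac{1}{h}\langle dG,\nabla f\rangle$ is multiplied by the $h^2$ prefactor on $\tr G$.

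For the local-minimum assertion, the explicit formula for $dG$ shows $dG(x_0) = 0$ whenever $\nabla f(x_0) = 0$, so $x_0$ is automatically a critical point of $G$; evaluating (\ref{glap}) at $x_0$ then gives $\tilde{\tr} G(x_0) = 2 h(x_0)^4 |z(x_0)|^2 > 0$. The local-minimum property would then be extracted from this strict positivity of the trace of $\mathrm{Hess}_{\tilde g} G$ at $x_0$, in combination with the explicit form $\mathrm{Hess}(G)_{x_0} = 2h\, Ddf \cdot z^{\sharp}$ (obtained from the CPE), examined on a small neighbourhood of $x_0$ on which $z$ and $h$ remain bounded away from zero.

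The main technical obstacle is the algebraic bookkeeping in the Bochner step, where two independent families of terms must cancel simultaneously, and careful tracking of the shift $n-2 \to n-3$ under the conformal change. The most delicate point is passing from the positivity of $\tilde{\tr} G(x_0)$, which is only a \emph{trace} condition on $\mathrm{Hess}_{\tilde g} G$, to a genuine local-minimum assertion for $G$ at $x_0$.
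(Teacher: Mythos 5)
Your derivation of the identity (\ref{glap}) is correct, and it is a legitimate (slightly more self-contained) variant of the paper's: you compute $\tr G = 2h^2|z|^2 + 2z(\nabla f,\nabla f)$ by Bochner plus the CPE (\ref{cpe1}) in the metric $g$ and then apply the conformal transformation law once, whereas the paper directly quotes the conformal-change formulas for $\tilde{\tr}|df|^2$ and $\tilde{\tr}f^2$. The cancellations you describe do occur, the identity $dG = 2h\, i_{\nabla f}z$ (hence $\langle dG, df\rangle = 2h\,z(\nabla f,\nabla f)$) is exactly equation (\ref{tangent}) of the paper, and your bookkeeping of the coefficient $n-3 = (n-2)-1$ is right.

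The gap is in the local-minimum step, and it is real. The Hessian you propose to use is, at $x_0$ (where $\nabla f = 0$),
$$DdG\,(X,Y) = 2h\,z\bigl(Ddf(X)^{\sharp}, Y\bigr) = 2h\Bigl(h\,(z\circ z)(X,Y) - \tfrac{sf}{n(n-1)}\,z(X,Y)\Bigr),$$
whose eigenvalues are $2h\lambda\bigl(h\lambda - \tfrac{sf}{n(n-1)}\bigr)$ for $\lambda$ an eigenvalue of $z$. Since $z$ is traceless and nonzero at $x_0$, it has eigenvalues of both signs, and these expressions are easily negative (e.g.\ a small positive $\lambda$ when $hf>0$). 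So the Hessian is \emph{not} positive semidefinite in general, and the strict positivity of its trace --- which is all that (\ref{glap}) gives you --- cannot be upgraded to a local minimum by second-order information alone; you correctly flagged this as the delicate point but did not close it. The paper closes it with a different, first-order ingredient: by Lemma~\ref{lem3}, $i_{\nabla f}z = \alpha\,df$, so $dG = 2h\alpha\,df$ is everywhere proportional to $df$ and $G$ is constant on the connected level sets of $f$ (Lemma~\ref{lem2011-4-5-1}); combining this one-dimensional behaviour of $G$ transverse to the level sets with $dG(x_0)=0$ and $\tilde{\tr}G(x_0) = 2h^4|z|^2>0$ is what yields the local minimum. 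To repair your argument you should import that proportionality $dG \parallel df$ (which you in fact already derived) rather than attempt to control the full Hessian.
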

\begin{proof}
Making a conformal change (c.f. see \cite{BLR2003}) gives
$$ \tilde{\tr} \left(|df|^2+\frac s{n(n-1)}f^2\right)=2h^4|z|^2-2(n-3)h^2z(df,df), $$
since
\begin{eqnarray*} \tilde{\tr}|df|^2&=& 2h^4|z|^2-\frac {2s}{n(n-1)}h^2|df|^2+\frac {2s^2}{n(n-1)^2}f^2h^2\\
& & -2(n-3)h^2z(df,df)+\frac {2(n-2)}{n(n-1)}sfh|df|^2
\end{eqnarray*}
and
\[ \tilde{\tr}f^2 = -\frac {2s}{n-1}h^2f^2 +2|df|^2h^2-2(n-2)fh|df|^2.\]
Thus the function $G$ satisfies (\ref{glap}).

Note that, for any tangent vector $\xi$  at $x_0$,
\be\label{tangent}\xi (G)=2\langle D_{\xi}df, df\rangle +\frac {2sf}{n(n-1)}\langle \xi,df\rangle
     = 2hz(\xi, df).\ee
Therefore $dG=2h \, i_{\nabla f} z$.

 If $x_0\in \mbox{\rm Crit}(f)$, we have $\xi(G)(x_0)=0$ by (\ref{tangent}). Also by (\ref{glap}) and the assumption that $z(x_0)\neq 0$, $\tilde{\tr}G=2h^4|z|^2>0$ at $x_0$. Since $G$ is constant on each level sets of $f$, we may conclude that $G$ has its local minimum at $x_0$. 
\end{proof}
\begin{remark}\label{rmkm}
By Lemma~\ref{lemg} we can apply the maximum principle to  $G=|df|^2+\frac s{n(n-1)}f^2$ on the open set $M^{\epsilon}=\{x\in M\, \vert\, 1+f(x)>\epsilon\}$ for an arbitrary small positive number $\epsilon$ to conclude that $G$ achieves its maximum on $B=\{x\in M\, \vert\, f(x)=-1\}$. Similarly, we may conclude that $G$ on the set $M_{-\epsilon}=\{x\in M\, \vert\, 1+f(x)<-\epsilon\}$ also achieves its local maximum  on $B$.
\end{remark}
 Let $x_1\in \mbox{\rm Crit}(f)$. Note that $\mbox{\rm Crit}(f)$ has measure zero by Proposition~\ref{prop01} .
As mentioned above, if $z(x_1)=0$, then $\alpha(x_1)$ can be continuously defined to be zero. Now we assume that $z(x_1)\neq 0$. Let $M^0=\{x\in M\,\vert\, 1+f(x)>0\}$.

\begin{lemma}\label{lemfup0} Let $x_1\in \mbox{\rm Crit}(f)$. If $x_1\in M^0$ and $z(x_1)\neq 0$, then $x_1$ has to be a local maximum point of $f$.  Also, if $f(x_1)<-1$ and $z(x_1)\neq 0$, then $x_1$ has to be a local minimum point of $f$. If $f(x_1)=-1$, then $x_1$ has to be a local minimum point of $f$.
\end{lemma}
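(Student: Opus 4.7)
For the case $f(x_1)=-1$, the conclusion is immediate from (\ref{cpe1}): the factor $(1+f)$ kills the $z$-term at $x_1$, leaving
\[
D_g df(x_1) = \frac{s_g}{n(n-1)}\, g,
\]
which is positive definite (the scalar curvature $s_g$ is forced to be positive, e.g. by integrating (\ref{eq18})). Hence the Hessian of $f$ at $x_1$ is positive definite and $x_1$ is a strict local minimum of $f$; note that no hypothesis on $z(x_1)$ is needed in this case.

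For the remaining two cases, set $f_1 := f(x_1)$ and $a := s_g/[n(n-1)]$. Evaluating (\ref{cpe1}) at $x_1$ gives the Hessian formula
\[
D_g df(x_1) = (1+f_1)\, z(x_1) - a f_1\, g.
\]
By Lemma~\ref{lemg}, $G$ has a local minimum at $x_1$, and by Lemma~\ref{lem2011-4-5-1}, $G$ is constant on each connected component of each regular level set of $f$. These together let me write $G = \Phi(f)$ in a neighborhood of $x_1$ for some smooth $\Phi$ defined near $f_1$, so that $D_g dG(x_1) = \Phi'(f_1)\, D_g df(x_1)$ (using $df(x_1)=0$). The sub-harmonicity input $\tilde{\tr} G(x_1) = 2(1+f_1)^4 |z(x_1)|^2 > 0$ from the proof of Lemma~\ref{lemg}, combined with $\tr_g f(x_1) = -s_g f_1/(n-1)$ from (\ref{eq18}), forces $\Phi'(f_1)$ to have sign opposite to $f_1$ (and in particular $f_1\neq 0$). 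Since $\Phi'(f_1)\, D_g df(x_1) = D_g dG(x_1) \ge 0$, I obtain that $D_g df(x_1)$ is negative semi-definite when $f_1>0$ (giving a local maximum) and positive semi-definite when $f_1<0$ (giving a local minimum). This immediately settles case (b), and it settles case (a) whenever $f_1>0$.

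The main obstacle will be to rule out the range $-1 < f_1 \leq 0$ in (a), where the framework above would otherwise point to a local minimum. To force $f_1>0$ in (a), I would exploit the rigidity contained in the identity $i_{\nabla f}z = \alpha\, df$ of Lemma~\ref{lem3}: on $M\setminus \mathrm{Crit}(f)$ the unit direction $\nabla f/|\nabla f|$ is always an eigenvector of $z$, so passing to the limit along different approach directions to $x_1$ forces the image of $D_g df(x_1)$ to lie in a single eigenspace of $z(x_1)$, while its kernel sits in the eigenspace for eigenvalue $a f_1/(1+f_1)$. This gives a very restrictive two-eigenvalue structure for $z(x_1)$ with multiplicities and values tied to $f_1$. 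Combined with the analyticity of $f$ on $M\setminus B$ (Proposition~\ref{prop01}), these constraints should preclude the configuration $-1 < f_1 \leq 0$ with $z(x_1)\neq 0$ and complete case (a).
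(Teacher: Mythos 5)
Your treatment of the case $f(x_1)=-1$ is fine and agrees with the paper (it is exactly the non-degenerate-Hessian observation of Proposition~\ref{prop1-2}). The rest of the proposal, however, has a genuine gap, and it sits precisely where you flag it: the case $x_1\in M^0$ with $-1<f(x_1)\le 0$. Your closing paragraph only asserts that the eigenstructure constraints on $z(x_1)$ ``should preclude'' this configuration; no argument is given, and I do not see how a purely local analysis at $x_1$ could ever settle it. The sign information that forces $x_1$ to be a local \emph{maximum} in this range is genuinely global: the paper's proof picks the global maximum point $x_2$ of $f$, shows by an integration of $(1+f)|z|^2$ over $\{f>f(x_2)-\epsilon\}$ that $\alpha<0$ there (so $G$ has a local minimum at $x_2$ as well as at $x_1$), joins $x_1$ to $x_2$ by a geodesic in $M^0$, and invokes the maximum principle for $G$ on $M^{\epsilon}$ (Remark~\ref{rmkm}) to conclude $G$ is constant along the geodesic, whence $z\equiv 0$ on a region by (\ref{glap}) and analyticity --- contradicting $z(x_1)\neq 0$. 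Nothing in your local Hessian framework substitutes for this step.

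Two further problems affect even the cases you do claim to close. First, a positive (resp.\ negative) \emph{semi-definite} Hessian at a critical point does not imply a local minimum (resp.\ maximum) --- consider $-x^2+y^4$ at the origin --- so the conclusion ``$D_gdf(x_1)\le 0$, giving a local maximum'' is unjustified; you would need strict definiteness or an additional argument. Second, writing $G=\Phi(f)$ with $\Phi$ differentiable \emph{at the critical value} $f_1$ and computing $D_gdG(x_1)=\Phi'(f_1)\,D_gdf(x_1)$ is not licensed by Lemma~\ref{lem2011-4-5-1}, which concerns only regular level sets; since $dG=2h\,\alpha\,df$, the existence of $\Phi'(f_1)$ amounts to the continuous extension of $\alpha$ across ${\rm Crit}(f)$, which the paper establishes only \emph{after} (and by means of) Lemma~\ref{lemfup0}, so your argument risks circularity. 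I would recommend abandoning the local Hessian route and running the global $G$--maximum-principle argument of the paper.
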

\begin{proof}
First of all, note that, for a connected level set $L_{f(x_1)}$ containing $x_1$, we have $|df|(y)=0$ for every $y\in L_{f(x_1)}$ and $G$ has its local minimum on $L_{f(x_1)}$ by Lemma~\ref{lemg}.

First we prove the case when $x_1\in M^0$. Suppose $x_1$ is not a local maximum point of $f$.
 Let $x_2$ be the  (global) maximum point of $f$. Then $G$ has its local minimum at $x_2$;
near $x_2$ we have
$$dG=2\, i_{\nabla f}z=2h\, \alpha \, df,$$
and $h\alpha$ is negative since, for a small connected neighborhood $\Omega^{\epsilon}=\{x\in M\, \vert\, f(x)>f(x_2)-\epsilon\}$ of $x_2$ with an arbitrarily small $\epsilon>0$ (see Fig 1 (a) where $x_2$ replaces $x_1$), $h=1+f>0$ on $\Omega^{\epsilon}$, and $$0<\int_{\Omega^{\epsilon}}(1+f)|z|^2=-\int_{\partial\Omega^{\epsilon} }\alpha |df| =-\alpha\int_{\partial\Omega^{\epsilon} } |df|,$$ implying that $\alpha <0$ on $\partial\Omega^{\epsilon}=L_{f(x_2)-\epsilon}$.
Here, we used the fact that $\int_{\Omega^{\epsilon}}(1+f)|z|^2\neq 0$; otherwise, since
the metric $g$ and thus the traceless Ricci tensor $z$ are analytic on $M^0$ as seen in the proof of Proposition~\ref{prop01}, $z\equiv 0$ on $\Omega^{\epsilon}$, which implies that $z\equiv 0$ on the connected component of $M^0$ containing $x_1$. This contradicts  the fact that $z(x_1)\neq 0$ for $x_1\in M^0$.

Now consider a geodesic $\gamma$ from $x_1$ to $x_2$  in $M^0$ (see Fig 1 (b) where $x_1$ replaces  $x_0$ and $x_2$ replaces $x_1$).
Since $G$ also has its local minimum at $x_2$ by the above argument and the fact that $G$ is constant on the each level sets of $f$, there exists a point on $\gamma$ at which $G$ has its local maximum. Then, by the maximum principle of $G$ mentioned in Remark~\ref{rmkm}, $G$ has to be constant along $\gamma$, and thus by (\ref{glap}) $z=0$ on the connected subset of $\{x\in M\, \vert\, f(x_1)\leq f(x)\leq f(x_2)\}$ containing $\gamma$, contradicting our assumption that $z(x_1)\neq 0$.
This  completes the proof of the first statement.

The proof of the second statement is similar.  The remaining case is when $f(x_1)=-1$. In this case, $x_1$ has to be a local minimum point of $f$ by Proposition~\ref{prop1-2}.
 \end{proof}

 By Lemma~\ref{lemfup0}, we may conclude that any critical point $x_1$ of $f$ with $z(x_1)\neq 0$ should be a local maximum if $f(x_1)>-1$, or a local minimum if $f(x_1)\leq -1$, and since $\alpha$ is constant on each level sets of $f$,
 $\alpha=z(df,df)/|df|^2$ can be continuously defined up to $x_1$ if $z(x_1)\neq 0$. If $z(x_1)=0$ for $x_1\in \mbox{\rm Crit}(f)$, $\alpha=0$ as discussed above.
In other words, $\alpha$ can be extended to a $C^0$-function on on all of $M$.
Also the differentiation of $\alpha$ on ${\rm {Crit}}(f)$ can be considered  in the distribution sense.

\section{A Special Case}
In this section we prove Theorem~\ref{th3} in the case when ${\mathcal W}_N= 0$ on $M\setminus {\rm Crit}(f)$.
Then by continuity ${\mathcal W}_N\equiv 0$ on $M$, and thus, by (\ref{zdecomp2}) $\alpha$ is smooth on all of $M$.
Due to the results obtained by Obata (\cite{Obata1962}), it is sufficient for the proof of Theorem~\ref{th3} to prove that $z=0$ identically on $M$.

\begin{theorem}\label{wneinstein}
If ${\mathcal W}_N\equiv 0$ on $M$, then $g$ is Einstein.
\end{theorem}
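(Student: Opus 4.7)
The plan is to prove $\alpha\equiv 0$ on $M$; by Lemma~\ref{lem5} this gives $z\equiv 0$, whence the CPE~(\ref{cpe1}) reduces to the Obata equation $D_gdf=-\frac{s_g f}{n(n-1)}g$, and Obata's theorem (cited via \cite{Obata1962} in the introduction) yields the isometry with a standard sphere.

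Combining equation~(\ref{eqt3}), the identity $|z|^2=\frac{n}{n-1}\alpha^2$ from Lemma~\ref{lem5}, and Lemma~\ref{lem2011-4-5-1} (so that $\alpha$ is constant along each connected level set of $f$, hence a function of $f$ alone on each gradient flow line), I would derive on $M\setminus{\rm Crit}(f)$ the pointwise identity
\[
(n-1)\,g(\nabla\alpha,\nabla f)=\alpha\bigl[n(1+f)\alpha+s_g f\bigr],
\]
which extends continuously to all of $M$. Along a gradient flow line this becomes the first-order ODE
\[
(n-1)|df|^2\alpha'(f)=\alpha\bigl[n(1+f)\alpha+s_g f\bigr].
\]

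The key step is a maximum-principle argument. At a global maximum $x^\ast$ of $\alpha$ at a regular point of $f$ one has $\alpha'(f(x^\ast))=0$; differentiating the ODE and using the second-derivative condition $\alpha''(f(x^\ast))\leq 0$ gives
\[
(n-1)|df|^2\alpha''(f(x^\ast))=\alpha(x^\ast)\bigl[n\alpha(x^\ast)+s_g\bigr]\leq 0.
\]
A strictly positive $\alpha(x^\ast)$ would force $\alpha(x^\ast)\leq -s_g/n<0$, a contradiction. At critical points of $f$, Lemma~\ref{lemfup0} together with the value $\alpha=-s_g f/(nh)$ forced by the ODE at $\alpha'=0$ provides the analogous obstruction, and piecing these together gives $\max_M\alpha\leq 0$, i.e., $\alpha\leq 0$ on $M$. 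The symmetric minimum-principle analysis shows that if $\min_M\alpha<0$, then it is realized at an interior regular point $x_\dagger$ with $\alpha(x_\dagger)\leq -s_g/n$ and $f(x_\dagger)<-1$.

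To finish, I would integrate (\ref{eqt3}) over $M$; using (\ref{eq18}) and the divergence theorem yields the global identity $\int_M(1+f)\alpha^2\,dv_g=0$. Combining this with $\alpha\leq 0$, with the sub-harmonicity of $G=|df|^2+\frac{s_g f^2}{n(n-1)}$ from Lemma~\ref{lemg} (so that $G$ achieves its maximum on $B=\{f=-1\}$ by Remark~\ref{rmkm}), and with a splitting of the integral over $M^+=\{f>-1\}$ and $M^-=\{f<-1\}$, one should force $\alpha\equiv 0$ on each piece. The main obstacle I anticipate is this last step, namely excluding the remaining interior-minimum scenario in $M^-$, since the maximum principle alone constrains the location of the minimum but does not immediately rule it out; the expected resolution uses $\int_M(1+f)\alpha^2=0$ together with the maximum principle for $G$ to force $\alpha\equiv 0$ on $M^+$, after which the integral identity restricted to $M^-$ (where $1+f<0$ and $\alpha^2\geq 0$) yields $\alpha\equiv 0$ on $M^-$ as well, completing the proof.
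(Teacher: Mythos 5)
Your first half tracks the paper closely: the identity $(n-1)\langle d\alpha,df\rangle=\alpha\left[n(1+f)\alpha+s_gf\right]$ is exactly the paper's equation (\ref{wneq1}) (equivalently Lemma~\ref{lemwn1}), and your maximum-principle argument forcing $-s_g/n\le\alpha(x^\ast)\le 0$, hence $\alpha\le 0$, is precisely Lemma~\ref{lemwn2}. The integral identity $\int_M(1+f)\alpha^2\,dv_g=0$ is also correct. But the gap is exactly where you anticipate it, and nothing you propose closes it. Since $1+f$ changes sign, $\int_M(1+f)\alpha^2=0$ together with $\alpha\le 0$ yields no pointwise information on either $M^+=\{f>-1\}$ or $M^-=\{f<-1\}$ separately: the identity merely balances a nonnegative contribution from $M^+$ against a nonpositive one from $M^-$, and both contributions are already consistent with the boundary identities $\int_{M^\pm}(1+f)|z|^2=\mp\int_{\partial M^\pm}\alpha|df|$, whose two sides automatically carry the same sign once $\alpha\le 0$. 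The maximum principle for $G$ from Remark~\ref{rmkm} only locates the maximum of $G$ on $B=\{f=-1\}$ and supplies no mechanism for killing $\alpha$ on $M^+$; so ``force $\alpha\equiv 0$ on $M^+$'' is asserted, not proved.

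The paper's actual route to closing this gap is a superharmonicity argument, which is the idea your proposal is missing. From $\alpha'=\frac{n}{n-1}\alpha\,\delta N$ one computes $\Delta\alpha=\frac{n\alpha}{n-1}\left(\alpha+\frac{s_g}{n}\right)+\frac{2}{n-1}\alpha'\,\delta N$ (Lemma~\ref{lemwn1}); since $\alpha\le 0$ and $\alpha'\,\delta N=\frac{n}{n-1}\alpha(\delta N)^2\le 0$, the only possible obstruction to $\Delta\alpha\le 0$ is the set $\Omega=\{\alpha+\frac{s_g}{n}<0\}$. Lemmas~\ref{holem} and~\ref{spha} show that $\Omega$ has measure zero via the divergence-theorem computation $\frac{1}{n}\int_\Omega\alpha'=\int_{\partial\Omega}\alpha$, which is contradictory because $\alpha'>0$ on $\Omega\cap H$ and on $\partial\Omega$ while $\alpha\le 0$. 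Hence $\Delta\alpha\le 0$ on all of the compact manifold $M$, so $\alpha$ is constant, and only then does the integral identity --- now reading $\frac{n}{n-1}\alpha^2=\frac{n}{n-1}\int_M(1+f)\alpha^2=0$ because $\int_Mf=0$ and ${\rm vol}(M)=1$ --- give $\alpha\equiv 0$, whence $z\equiv 0$ by (\ref{zdecomp2}). Without some substitute for this constancy step your argument does not conclude.
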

In the following we shall prove that $\alpha$ is constant by showing that $\alpha$ is superharmonic on $M$ (Lemma~\ref{spha}).
If $\alpha$ is constant,  since, by (\ref{eqt4}), (\ref{zdecomp2}), and the fact that $\int_M f=0$,
$$ \frac n{n-1}\, \alpha^2=\frac n{n-1}\int_M (1+f)\, \alpha^2= \int_M (1+f)|z|^2 =-\int_M \delta (i_{\nabla f}z) =0,$$
implying that $\alpha\equiv 0$ on $M$.

 When ${\mathcal W}_N\equiv 0$, by (\ref{eqt3}) and (\ref{zdecomp2}) we have
\be \label{wneq1}\frac n{n-1}(1+f)\, \alpha^2 =-\frac s{n-1}\, \alpha f +N(\alpha)|df|, \ee
since $N(\alpha)|df|=\langle d\alpha, df\rangle$. By virtue of Lemma~\ref{lem2011-4-5-1}, we denote $\alpha'=N(\alpha)$ and $\alpha''=NN(\alpha)$.

For the proof of Theorem~\ref{wneinstein}, we need the following Lemma~\ref{lemwn1} and Lemma~\ref{lemwn2}.
\begin{lemma} \label{lemwn1}
If ${\mathcal W}_N\equiv 0$ on $M$, we have the following equalities
\begin{eqnarray} \alpha' &=& \frac n{n-1}\, \alpha\, \delta N,\label{wneqn3}\\
 \alpha''&=& \frac {n\alpha}{n-1}\left( \alpha +\frac s{n}\right) +\frac {n+1}{n-1}\, \alpha ' \, \delta N,\label{wneqn4}\\
\tr \,\alpha &=& \frac {n\alpha}{n-1}\left( \alpha +\frac s{n}\right) +\frac {2}{n-1}\, \alpha ' \, \delta N. \label{wneqn5}
\end{eqnarray}
\end{lemma}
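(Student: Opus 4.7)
The three identities share a common source: the scalar equation
\[
\frac{n}{n-1}(1+f)\alpha^{2} = -\frac{s}{n-1}\alpha f + \alpha'\,|df|,
\]
which is just (\ref{wneq1}), together with the CPE (\ref{cpe1}) evaluated on $(N,N)$. The plan is first to express $\delta N$ geometrically, then derive (\ref{wneqn3}) as an algebraic rearrangement, then obtain (\ref{wneqn4}) by differentiating (\ref{wneq1}) along $N$, and finally obtain (\ref{wneqn5}) by expressing the Laplacian of a level-set-constant function in terms of $N$-derivatives.

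\textbf{Step 1 (computing $\delta N$ and proving (\ref{wneqn3})).} I would compute $\delta N$ directly from $N=\nabla f/|df|$. Using $\delta(|df|^{-1}\nabla f) = |df|^{-1}\delta(\nabla f) - |df|^{-3}Ddf(\nabla f,\nabla f)$, the relation $\tr f=-sf/(n-1)$, and the CPE identity
\[
Ddf(\nabla f,\nabla f) = (1+f)\alpha\,|df|^{2} - \frac{sf}{n(n-1)}|df|^{2}
\]
(since $z(\nabla f,\nabla f)=\alpha|df|^{2}$ by (\ref{eqnn1-1})), a clean cancellation gives
\[
|df|\,\delta N = (1+f)\alpha + \frac{sf}{n}.
\]
Multiplying by $\tfrac{n}{n-1}\alpha$ and comparing with the rearrangement $\alpha'|df|=\tfrac{\alpha}{n-1}(n(1+f)\alpha+sf)$ of (\ref{wneq1}) yields (\ref{wneqn3}) immediately.

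\textbf{Step 2 (proving (\ref{wneqn4})).} I would differentiate $\alpha'|df|=\tfrac{\alpha}{n-1}(n(1+f)\alpha+sf)$ along $N$. Using $N(f)=|df|$ and $N(|df|)=Ddf(N,N)=(1+f)\alpha - \tfrac{sf}{n(n-1)}$ (the CPE on $(N,N)$), the right side expands to
\[
\frac{1}{n-1}\Bigl[\alpha'(2n(1+f)\alpha+sf) + |df|\,\alpha(n\alpha+s)\Bigr],
\]
while the left side equals $\alpha''|df|+\alpha' N(|df|)$. Transposing $\alpha'N(|df|)$ to the right, the $\alpha'$-coefficients combine as
\[
\frac{2n(1+f)\alpha+sf}{n-1}-\Bigl((1+f)\alpha-\frac{sf}{n(n-1)}\Bigr)
= \frac{n+1}{n-1}\Bigl((1+f)\alpha+\frac{sf}{n}\Bigr)= \frac{n+1}{n-1}|df|\,\delta N,
\]
using Step 1. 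Dividing through by $|df|$ produces exactly (\ref{wneqn4}).

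\textbf{Step 3 (proving (\ref{wneqn5})).} Choose a local orthonormal frame $\{E_1,\dots,E_{n-1},N\}$ with $E_i$ tangent to the level sets of $f$. Since $\alpha$ is constant on each level set (Lemma~\ref{lem2011-4-5-1}), $E_i(\alpha)=0$ for $i<n$. Combined with $D_NN=0$ (established in the proof of Lemma~\ref{lem2011-4-5-1}),
\[
\tr \alpha = NN(\alpha) - \sum_{i=1}^{n-1}(D_{E_i}E_i)(\alpha) = \alpha'' - N(\alpha)\sum_{i=1}^{n-1}\langle D_{E_i}E_i,N\rangle = \alpha'' - \alpha'\,\delta N,
\]
where in the last equality I use $\sum_{i=1}^{n-1}\langle D_{E_i}E_i,N\rangle = -\sum_{i=1}^{n-1}\langle E_i,D_{E_i}N\rangle = -\operatorname{div}(N)=\delta N$ (the $N$-term in $\operatorname{div}(N)$ vanishes because $\langle D_NN,N\rangle=0$). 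Substituting (\ref{wneqn4}) into $\tr\alpha = \alpha''-\alpha'\delta N$ and simplifying $\tfrac{n+1}{n-1}-1=\tfrac{2}{n-1}$ gives (\ref{wneqn5}).

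\textbf{Main obstacle.} None of the three identities is conceptually hard once Step 1 is in place; the only thing that requires care is the bookkeeping in Step 2, where two different $\alpha'$-terms must combine to produce exactly the $\delta N$ multiplier. The essential structural point is that $D_NN=0$ and that $\alpha$ is a function of $f$ only along the flow of $N$, which is what makes the Laplacian collapse to $\alpha''-\alpha'\delta N$ in Step 3.
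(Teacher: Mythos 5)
Your proof is correct and follows essentially the same route as the paper: both arguments hinge on the identity $|df|\,\delta N=(1+f)\alpha+\frac{sf}{n}$ coming from the CPE, obtain (\ref{wneqn3}) by comparing this with (\ref{wneq1}), obtain (\ref{wneqn4}) by differentiating (\ref{wneq1}) along $N$ using $N(|df|)=(1+f)\alpha-\frac{sf}{n(n-1)}$, and obtain (\ref{wneqn5}) from the collapse $\tr\,\alpha=\alpha''-\alpha'\,\delta N$. The only caveat is the sign in your Leibniz formula for $\delta(|df|^{-1}\nabla f)$: with the paper's convention $\delta N=-\sum_i\langle D_{E_i}N,E_i\rangle$ the Hessian term enters with a plus sign, which is what yields your (correctly) stated formula for $|df|\,\delta N$.
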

\begin{proof}
From the definition of divergence and  $D_NN=0$,
\bea \delta N&=& -\sum_{i=1}^{n-1}\langle D_{E_i}N,E_i\rangle\\ &=&-\frac 1{|df|}\left((1+f)\sum_{i=1}^{n-1}z(E_i,E_i)-\frac {sf}{n}\right)
= \frac 1{|df|}\left( (1+f)\alpha +\frac {sf}n\right).
\eea
Thus by (\ref{wneq1}) we obtain
$$ \alpha'|df|= \frac n{n-1} \, {\alpha}\, \left( (1+f)\alpha+\frac sn f \right)=\frac n{n-1}\, \alpha \, |df| \, \delta N.$$
Taking the derivative in the direction $N$ of (\ref{wneq1}) gives
$$ (n-1) \alpha''= n\alpha^2+s\, \alpha +(n+1) \alpha' \delta N, $$
where we used (\ref{wneqn3}) and the fact that
\be\label{dfn} N(|df|)= \langle D_Ndf,N\rangle=(1+f)\alpha -\frac {sf}{n(n-1)}.\ee
The last equation for the Laplacian of $\alpha$ follows from the following observation
$$ \tr \,\alpha =-\delta(d\alpha)=-\delta (\alpha' N)=\alpha''-\alpha' \, \delta N. $$
\end{proof}
\begin{remark}\label{rmkdf}  Equation (\ref{dfn}) holds on $M\setminus \mbox{\rm Crit}(f)$ without any condition on ${\mathcal W}_N$.
\end{remark}
The following is a special case of Lemma~\ref{analytic}. However, we include the proof for the sake of the completeness.
\begin{lemma}\label{lemwn2}
If ${\mathcal W}_N\equiv 0$ on $M$, $\alpha \leq 0$ on $M$.
\end{lemma}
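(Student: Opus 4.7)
The plan is to argue by contradiction using the maximum principle: I assume $\alpha$ attains a strictly positive maximum at some point $x_0\in M$, then derive $\Delta\alpha(x_0)>0$, contradicting the maximum principle.

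I would begin with a preliminary observation that the scalar curvature satisfies $s>0$. Taking the trace of the CPE gives $\Delta f=-\frac{s}{n-1}f$; multiplying by $f$ and integrating over the closed manifold $M$ yields
$$\int_M|\nabla f|^2\,dv_g=\frac{s}{n-1}\int_M f^2\,dv_g.$$
Since $(g,f)$ is non-trivial, both sides are strictly positive, so $s>0$.

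The key algebraic step is to combine the identities in Lemma~\ref{lemwn1} into a clean elliptic inequality. Wherever $\alpha\ne 0$, equation (\ref{wneqn3}) gives $\delta N=\frac{n-1}{n\alpha}\alpha'$; substituting this into (\ref{wneqn5}) yields
$$\Delta\alpha=\frac{n\alpha}{n-1}\Bigl(\alpha+\frac{s}{n}\Bigr)+\frac{2(\alpha')^2}{n\alpha}\quad\text{on}\ \{\alpha\ne 0\}\setminus\mathrm{Crit}(f).$$
Let $U$ be a neighborhood of the putative maximum point $x_0$ on which $\alpha>0$. On $U\setminus\mathrm{Crit}(f)$ the correction term $\frac{2(\alpha')^2}{n\alpha}$ is non-negative, which gives the differential inequality
$$\Delta\alpha\;\ge\;\frac{n\alpha}{n-1}\Bigl(\alpha+\frac{s}{n}\Bigr)\quad\text{on}\ U\setminus\mathrm{Crit}(f).$$
Since $\mathrm{Crit}(f)$ has measure zero by Proposition~\ref{prop01}, the regular locus $U\setminus\mathrm{Crit}(f)$ is dense in $U$; and since $\alpha$ is smooth on $M$, both sides of the inequality are continuous on $U$, so the inequality extends to every point of $U$, including $x_0$. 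Because $s>0$ and $\alpha(x_0)>0$, the right-hand side evaluated at $x_0$ is strictly positive, so $\Delta\alpha(x_0)>0$. But at a maximum we must have $\Delta\alpha(x_0)\le 0$, a contradiction. Hence $\alpha\le 0$ on $M$.

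The chief obstacle is that the quantities $N$, $\alpha'$, and $\delta N$ appearing in Lemma~\ref{lemwn1} are undefined on $\mathrm{Crit}(f)$, so the maximum principle cannot be invoked directly at a point $x_0\in\mathrm{Crit}(f)$. The resolution exploits two features: the sign of the troublesome term $\frac{2(\alpha')^2}{n\alpha}$ is favorable on $\{\alpha>0\}$, so it can be discarded at the cost of only weakening the inequality; and the density of the regular locus combined with the smoothness of $\alpha$ lets the resulting strict inequality on $U\setminus\mathrm{Crit}(f)$ propagate to $x_0$ by continuity.
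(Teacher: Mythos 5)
Your proposal is correct and follows essentially the same route as the paper: the paper also evaluates (\ref{wneqn5}) at a maximum point $p$ of $\alpha$, where $\alpha'(p)=0$ kills the $\alpha'\,\delta N$ term directly, and concludes $-\frac{s}{n}\le\alpha(p)\le 0$. Your version is slightly more careful in two respects the paper leaves implicit --- the verification that $s>0$ (needed to read off the sign of $\alpha(p)$ from $\alpha(p)\bigl(\alpha(p)+\frac{s}{n}\bigr)\le 0$) and the extension of the differential inequality across $\mathrm{Crit}(f)$ by continuity --- but the underlying argument is the same.
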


\begin{proof}
Let $p$ be a maximum point of $\alpha$. Then $\alpha'(p)=0$ and $\tr \, \alpha (p)\leq 0$.
From
$$ 0\geq \tr \,\alpha (p)=\alpha''(p) =\frac n{n-1}\,\alpha^2(p)+\frac s{n-1}\,\alpha(p),$$
$(\alpha(p))^2 +\frac sn\, \alpha(p)\leq 0$, which implies that $-\frac sn \leq \alpha(p)\leq 0$. Thus we may conclude that
$\alpha$ is always non-positive on $M$.
\end{proof}

Let
$H= \{x \in M \,\vert \, \tr \, \a (x) \leq 0\,\}$ and $\Omega = \{x\in M \,\vert \, \a(x) + \frac{s}{n} <0\,\}$.
The following lemma gives us a good understanding of the set $\Omega$.
\begin{lemma} \label{holem} If ${\mathcal W}_N\equiv 0$ on $M$, $\overline{\Omega}\neq M$ and $M\setminus \Omega \varsubsetneq H$.
In particular, if $H\varsubsetneq M$, $\Omega$ can be written as the disjoint union of $M\setminus H$ and $\Omega \cap H$.
\end{lemma}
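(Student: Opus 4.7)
I eliminate $\delta N$ between the identities of Lemma~\ref{lemwn1}: using (\ref{wneqn3}) to substitute $\delta N=\tfrac{(n-1)\alpha'}{n\alpha}$ into (\ref{wneqn5}) gives, wherever $\alpha\neq 0$,
\begin{equation*}
\tr\alpha=\frac{n\alpha(\alpha+s/n)}{n-1}+\frac{2(\alpha')^{2}}{n\alpha};
\end{equation*}
at points where $\alpha=0$, (\ref{wneqn3}) forces $\alpha'=0$ and hence $\tr\alpha=0$ via (\ref{wneqn5}). Every claim will follow from a sign check on these two summands, together with $\alpha\le 0$ (Lemma~\ref{lemwn2}), the substitution trick into (\ref{wneq1}), and $s>0$.

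For $\overline{\Omega}\ne M$, I pick a maximum point $p$ of the smooth function $\alpha$. Lemma~\ref{lemwn2} gives $\alpha(p)\in[-s/n,0]$. I rule out $\alpha(p)=-s/n$ by evaluating (\ref{wneq1}) at $p$: since $d\alpha(p)=0$ at any interior maximum, the term $N(\alpha)(p)\,|df|(p)=\langle d\alpha,df\rangle(p)$ vanishes, so the equation collapses to $(1+f(p))=f(p)$, which is absurd. Hence $\alpha(p)>-s/n$; by continuity $\alpha+s/n>0$ on a neighbourhood of $p$, and that neighbourhood lies outside $\overline{\Omega}$.

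For $M\setminus\Omega\subseteq H$: on this set $\alpha\le 0$ and $\alpha+s/n\ge 0$, so the first summand of the identity is $\le 0$; the second summand is $\le 0$ wherever $\alpha<0$, and $\tr\alpha=0$ at $\{\alpha=0\}$. For the reverse inclusion $M\setminus H\subseteq\Omega$ (which yields the disjoint decomposition in the ``in particular'' clause), if $x\in M\setminus H$ then $\tr\alpha(x)>0$; the case $\alpha(x)=0$ is excluded since it forces $\tr\alpha=0$, so $\alpha(x)<0$ and the second summand is $\le 0$, forcing $\alpha(x)(\alpha(x)+s/n)>0$ and therefore $\alpha(x)+s/n<0$, i.e.\ $x\in\Omega$. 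Together with the previous inclusion this yields $\Omega=(M\setminus H)\sqcup(\Omega\cap H)$ whenever $H\subsetneq M$.

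The strict inclusion $M\setminus\Omega\varsubsetneq H$ is the subtle point; it amounts to producing a point of $H\cap\Omega$. Working under $\alpha\not\equiv 0$ (in the trivial case $\alpha\equiv 0$ one has $z\equiv 0$ and Theorem~\ref{wneinstein} is immediate), the same substitution at a minimum point $q$ of $\alpha$ rules out $\alpha(q)=-s/n$, and the sign requirement $\tr\alpha(q)\ge 0$ at a minimum rules out $\alpha(q)\in(-s/n,0)$, so $\min\alpha<-s/n$ strictly and the level set $\{\alpha=-s/n\}$ is a nonempty compact subset on which the key identity collapses to $\tr\alpha=-2(\alpha')^{2}/s\le 0$. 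Choosing a regular value of $\alpha$ slightly below $-s/n$ (available by Sard's theorem applied to the smooth function $\alpha$) yields a point $y$ with $\alpha(y)+s/n<0$ and, by continuity of $\tr\alpha$ near $\{\alpha=-s/n\}$, $\tr\alpha(y)\le 0$, placing $y$ in $H\cap\Omega$. Controlling this continuity argument, particularly guaranteeing $\alpha'\not\equiv 0$ on the level set $\{\alpha=-s/n\}$, is the main obstacle.
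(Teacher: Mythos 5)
Your sign analysis giving $M\setminus \Omega \subset H$ and the ``in particular'' clause is correct and is essentially the paper's argument (the paper writes the second summand as $\frac{2}{n-1}\alpha'\,\delta N=\frac{2n}{(n-1)^2}\alpha(\delta N)^2\le 0$ rather than $\frac{2(\alpha')^2}{n\alpha}$, but these are the same thing). Your proof of $\overline{\Omega}\ne M$ is a genuinely different and arguably cleaner route: the paper instead assumes $\overline{\Omega}=M$, observes $\delta N=\frac{1}{|df|}\left[\alpha+f\left(\alpha+\frac{s}{n}\right)\right]<0$ on $\{f>0\}$, and contradicts the divergence theorem there; your evaluation of (\ref{wneq1}) at a maximum point of $\alpha$, collapsing to $1+f=f$ when $\alpha=-s/n$, avoids the boundary-regularity issues of that integration and is sound.

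The genuine gap is the one you name yourself: the strict inclusion. Your argument needs $\alpha'\not\equiv 0$ on $\{\alpha=-s/n\}$, and you leave this open. The paper closes exactly this hole, and the key is the explicit formula for $\delta N$ that you discarded when you eliminated it between (\ref{wneqn3}) and (\ref{wneqn5}). From the proof of Lemma~\ref{lemwn1}, $\delta N=\frac{1}{|df|}\left((1+f)\alpha+\frac{sf}{n}\right)=\frac{1}{|df|}\left(\alpha+f\left(\alpha+\frac{s}{n}\right)\right)$, so on $\partial\Omega$, where $\alpha+\frac{s}{n}=0$, one gets $\delta N=\frac{\alpha}{|df|}=-\frac{s}{n|df|}<0$, hence by (\ref{wneqn3}) $\alpha'=\frac{n}{n-1}\alpha\,\delta N=\frac{s^2}{n(n-1)|df|}>0$ there, and by (\ref{wneqn5}) $\Delta\alpha=\frac{2}{n-1}\alpha'\,\delta N<0$ \emph{strictly} on $\partial\Omega$. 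Continuity then places nearby points of $\Omega$ in $H$, which is the strict inclusion. So the obstacle you flag is not merely technical bookkeeping --- it is the actual content of the strictness claim --- but it is removable by one line, provided you reinstate the formula for $\delta N$ rather than working only with the combined identity $\Delta\alpha=\frac{n\alpha(\alpha+s/n)}{n-1}+\frac{2(\alpha')^2}{n\alpha}$. (Your Sard/regular-value step is then unnecessary: any point of $\Omega$ sufficiently close to $\partial\Omega$ works.) As written, the proposal does not prove $M\setminus\Omega\varsubsetneq H$.
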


\begin{proof}
Note that $\overline{\Omega} \varsubsetneq M$; otherwise, on the set $M^{+1}=\{x\in M\, \vert\, 1+f(x)> 1\}$
$$\delta N = \frac{1}{|df|} \left[\a + f(\a+\frac{s}{n})\right] < 0,$$
and thus we obtain the following contradiction;
$$ 0> \int_{M^{+1}} \delta N =-\int_{M^{+1}}{\rm div}(N) =-\int_{\partial M^{+1}}\langle N, -N\rangle ={\rm vol}(\partial M^{+1}).$$
Thus $M\setminus \Omega$ is a non-empty set. In particular, $M\setminus \Omega$ is a subset of $H$. For the proof of this fact, we need to show that $\tr \,\alpha \leq 0$ on $M\setminus \Omega$. This follows  from  (\ref{wneqn5}) and the facts that
$\alpha \leq 0$ and $\alpha' \delta N =\frac n{n-1}\alpha (\delta N)^2\leq 0$ on $M$
by  Lemma~\ref{lemwn2} and  (\ref{wneqn3}).

Also note that we have $\alpha =-\frac sn$ on $\partial \Omega$ and thus
$$ \alpha'=\frac n{n-1}\, \alpha\, \delta N =\frac n{n-1}\frac {\alpha^2}{|df|} = \frac {s^2}{n(n-1)}\frac 1{|df|} >0$$
on $\partial \Omega$. Therefore, the outward unit normal to $\partial \Omega$ is given by
$$ \frac {\nabla \alpha}{|\nabla \alpha|}= \frac {\alpha' N}{|\alpha' N|}=\frac {\alpha'}{|\alpha'|}N=N.$$
The positivity of $\alpha'$ on $\partial \Omega$ also implies that
$M\setminus \Omega \neq H$, since  $\delta N=-\frac sn \frac 1{|df|}<0$ on $\partial \Omega$ and by (\ref{wneqn5})
$$\tr \alpha =\frac 2{n-1}\, \alpha'\, \delta N <0.$$
\end{proof}

\begin{lemma} \label{spha} If ${\mathcal W}_N\equiv 0$ on $M$, $H=M$.
\end{lemma}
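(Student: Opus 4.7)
The strategy is to reduce the lemma to showing $\Omega=\emptyset$: once that is known, equation (\ref{wneqn5}) combined with $\alpha\le 0$ (Lemma~\ref{lemwn2}) and $(\delta N)^2\ge 0$ makes both summands on the right nonpositive, so $\Delta\alpha\le 0$ everywhere and $H=M$. Conversely, if $\Omega$ were nonempty, the boundary data from Lemma~\ref{holem} (namely $\alpha=-s/n$, $\alpha'>0$, and outward unit normal $\nu=N$ on $\partial\Omega$) combined with the divergence theorem would give
\[
\int_\Omega\Delta\alpha\,dV=\int_{\partial\Omega}\alpha'\,d\sigma>0,
\]
so $\Delta\alpha$ would be positive on a set of positive measure in $\Omega$, contradicting $H=M$. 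Thus the lemma is equivalent to the assertion $\Omega=\emptyset$.

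To establish $\Omega=\emptyset$ I argue by contradiction: assume $\Omega\neq\emptyset$. The plan is to combine the boundary identity above with two further integral identities. First, multiplying (\ref{wneqn5}) by the test function $(\alpha+s/n)$, which vanishes on $\partial\Omega$, and integrating by parts yields
\[
-\int_\Omega|\nabla\alpha|^2\,dV=\frac{n}{n-1}\int_\Omega\alpha(\alpha+s/n)^2\,dV+\frac{2n}{(n-1)^2}\int_\Omega\alpha(\alpha+s/n)(\delta N)^2\,dV,
\]
which, combined with the pointwise relation $|\nabla\alpha|^2=\frac{n^2}{(n-1)^2}\alpha^2(\delta N)^2$ coming from (\ref{wneqn3}), constrains the ratio between the volume integrals of $\alpha(\alpha+s/n)(\delta N)^2$ and $\alpha(\alpha+s/n)^2$. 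Second, the global identity $\int_M(1+f)\alpha^2\,dV=0$, obtained by integrating (\ref{eqt4}) and using $|z|^2=\tfrac{n}{n-1}\alpha^2$ from Lemma~\ref{lem5}, pins down the interplay between the regions $\{f>-1\}$ and $\{f<-1\}$. An appropriate linear combination of these three identities should force the nonnegative quantities $\int_\Omega\alpha(\alpha+s/n)(\delta N)^2$ and $\int_\Omega|\nabla\alpha|^2$ to vanish; hence $\nabla\alpha\equiv 0$ on $\Omega$, and together with $\alpha=-s/n$ on $\partial\Omega$ this forces $\alpha\equiv-s/n$ on $\overline\Omega$, contradicting $\alpha<-s/n$ in $\Omega$.

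The main obstacle is the algebraic bookkeeping needed to weight the three identities so the positive boundary contribution exactly cancels the mixed-sign interior terms: on $\Omega$ the product $\alpha(\alpha+s/n)$ is positive while $\alpha(\delta N)^2$ is nonpositive, and the cancellation has to be forced by the precise coefficients produced by (\ref{wneqn3}). If this direct route does not close cleanly, a natural fallback is to invoke the real analyticity of $g$ and $f$ on $M\setminus B$ (established in Proposition~\ref{prop01}) to propagate $\nabla\alpha\equiv 0$ from a minimum of $\alpha$ in $\Omega$ throughout an entire connected component of $M\setminus B$, thereby conflicting with the strict positivity $\alpha'>0$ recorded on $\partial\Omega$.
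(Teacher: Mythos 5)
Your reduction of the lemma to the statement $\Omega=\emptyset$ is sound and matches the paper's framing (Lemma~\ref{holem} already gives $M\setminus\Omega\subset H$, so emptiness of $\Omega$ immediately yields $H=M$). The gap is that you never actually derive a contradiction from $\Omega\neq\emptyset$. Your first integral identity is sign-consistent rather than contradictory: substituting $\alpha'\delta N=\tfrac{n}{n-1}\alpha(\delta N)^2$ and $|\nabla\alpha|^2=\tfrac{n^2}{(n-1)^2}\alpha^2(\delta N)^2$ into it reduces it to
$$\frac{n}{(n-1)^2}\int_\Omega \alpha\,(\delta N)^2\Bigl((n+2)\alpha+\tfrac{2s}{n}\Bigr)\,dV \;=\; -\frac{n}{n-1}\int_\Omega \alpha\,\bigl(\alpha+\tfrac{s}{n}\bigr)^2dV,$$
and on $\Omega$ (where $\alpha<-s/n<0$) \emph{both} integrands are nonnegative, so nothing is forced to vanish. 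The hoped-for ``appropriate linear combination'' with $\int_M(1+f)\alpha^2=0$ is never exhibited, and it is unclear how a global identity over $M$ can be weighted against integrals supported only on $\Omega$. The fallback is also not a proof: real analyticity propagates the vanishing of a quantity from an open set, not from the single point where $\alpha$ attains a minimum, so you cannot conclude $\nabla\alpha\equiv 0$ on a component from $d\alpha=0$ at one interior point.

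The missing idea is a sign control on $\alpha'$ throughout $\Omega$, which is what the paper establishes. Concretely: at any point of $\Omega\cap H$ where $\alpha'=0$, equation (\ref{wneqn5}) gives $\Delta\alpha=\tfrac{n}{n-1}\alpha(\alpha+\tfrac sn)>0$, contradicting $\Delta\alpha\le 0$ on $H$; hence $\alpha'\neq 0$ on $\Omega\cap H$, and by continuity from $\alpha'>0$ on $\partial\Omega$ one gets $\alpha'>0$ there, while a maximum-principle argument on $M\setminus H$ extends this to $\alpha'\ge 0$ on all of $\Omega$ with $\int_\Omega\alpha'>0$. The decisive integration is then of the vector field $\alpha N$ (not $\nabla\alpha=\alpha'N$, whose flux $\int_{\partial\Omega}\alpha'>0$ carries no contradiction): by (\ref{wneqn3}), $\delta(\alpha N)=-\alpha'+\alpha\,\delta N=-\tfrac1n\alpha'$, so the divergence theorem yields $\tfrac1n\int_\Omega\alpha'=\int_{\partial\Omega}\alpha\le 0$ by Lemma~\ref{lemwn2}, contradicting $\int_\Omega\alpha'>0$. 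Without the positivity of $\alpha'$ on $\Omega$ and this particular choice of vector field, your argument does not close.
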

\begin{proof} By Lemma~\ref{holem},  $M\setminus H \subset \Omega$.
We claim that $\Omega$ has measure zero, implying the proof of our lemma.\par
Suppose that the $n$-dimensional measure of $\Omega$ is positive.
First we observe that $\alpha'\neq 0$ in $\Omega\cap H$; if there is a point $x_0\in \Omega \cap H$ such that $\alpha'(x_0)=0$, then, since $\alpha <-\frac sn$ on $\Omega$
$$\tr \alpha(x_0) =\frac n{n-1} \, \alpha (x_0) \left( \alpha (x_0)+\frac sn\right) >0, $$
contradicting the fact that $\tr \,\alpha \leq 0$ on $H$.
Thus, from the fact that $\alpha'>0$ on $\partial \Omega$, $\alpha'>0$ on $\Omega \cap H$. For $x\in M\setminus H= \Omega \setminus (\Omega \cap H)$, $\tr \,\alpha (x) >0$, and, since $\alpha$ is constant on each level sets of $f$,
we have $\alpha'\geq 0$ on the all of $\Omega$; otherwise there exists some point $y\in M\setminus H$ such that $\alpha$ has a local maximum at $y$, which is impossible by the maximum principle.
Also, due to the fact that
$\alpha'>0$ on $\partial \Omega$, $\int_{\Omega} \alpha'>0$.

Note that by the proof of Lemma~\ref{holem} the outward unit normal vector to $\partial \Omega$ is $N$.
Now by (\ref{wneqn3})
\[
\int_{\Omega} \delta (\alpha N)= \int_{\Omega} -\langle d\alpha, N\rangle +\alpha \,\delta N
 =-\frac 1n \int_{\Omega} \alpha'. \]
On the other hand, by the divergence theorem
\[ \int_{\Omega} \delta (\alpha N)= \int_{\partial \Omega} -\alpha. \]
Consequently, by Lemma~\ref{lemwn2}
$$ 0<\frac 1n \int_{\Omega} \alpha' =\int_{\partial \Omega} \alpha\,\leq 0, $$
which is a contradiction. This completes the proof of Lemma~\ref{spha}.
\end{proof}

\section{The proof of Theorem~\ref{th3}}
This section is devoted to the proof of Theorem~\ref{th3}. Due to Theorem~\ref{wneinstein}, it suffices to prove that ${\mathcal W}_N$ vanishes identically on $M$. More precisely,
\begin{theorem}\label{thmzero}
Let $(g, f)$ be a nontrivial solution of the {\rm CPE}.
 Assume also that $(M, g)$ has harmonic curvature. Then ${\mathcal W}_N = 0$.
\end{theorem}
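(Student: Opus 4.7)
The plan is to show $|\mathcal{W}_N|^2 \equiv 0$ on $M \setminus \mathrm{Crit}(f)$, which by Theorem~\ref{wneinstein} and Obata's theorem completes the proof. My strategy is to adapt the ODE-and-maximum-principle machinery of Section~4 so as to carry along the extra term $\left(\tfrac{n-2}{n-1}\right)^2 |\mathcal{W}_N|^2$ of the decomposition (\ref{zdecomp2}) throughout.

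First I would rerun the computations of Lemma~\ref{lemwn1} \emph{without} assuming $\mathcal{W}_N \equiv 0$. Combining (\ref{eqt3}) with (\ref{zdecomp2}) immediately yields the generalized form of (\ref{wneq1}),
\begin{equation*}
\frac{n}{n-1}(1+f)\alpha^2 + \frac{(n-2)^2}{n-1}(1+f)|\mathcal{W}_N|^2 \;=\; -\frac{s}{n-1}\alpha f + N(\alpha)\,|df|,
\end{equation*}
together with corrected expressions for $\alpha'$, $\alpha''$ and $\Delta\alpha$ each carrying an explicit $|\mathcal{W}_N|^2$ term. In parallel, integrating (\ref{eqt4}) over $M$ still produces the global identity $\int_M (1+f)|z|^2 = 0$, which via (\ref{zdecomp2}) couples weighted integrals of $\alpha^2$ and $|\mathcal{W}_N|^2$.

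Next I would extract a second, independent scalar relation for $|\mathcal{W}_N|^2$ from the harmonic curvature hypothesis. Because $s$ is constant and $\delta R = 0$, the traceless Ricci tensor $z$ is a Codazzi tensor: $D_X z(Y,Z) = D_Y z(X,Z)$. Feeding triples $(N, E_i, E_j)$ with $E_i, E_j$ tangent to the level sets into this identity, and using the decomposition (\ref{zdecomp1}) together with $D_NN = 0$ and $z(E_i, N) = 0$ from Lemma~\ref{lem4}, produces a transport equation for $\mathcal{W}_N$ along $N$ whose coefficients are determined by the shape operator of the level sets; the latter is itself expressible through $\alpha$, $\mathcal{W}_N$, $f$ and $|df|$ via (\ref{cpe1}), since $(Ddf)(E_i, E_j) = |df|\langle D_{E_i}N, E_j\rangle$ for tangent $E_i, E_j$. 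Contracting this equation with $\mathcal{W}_N$ should then yield a scalar identity of the schematic form $N(|\mathcal{W}_N|^2) = \Phi(\alpha,f,|df|)\,|\mathcal{W}_N|^2 + \Psi(\alpha,f,|df|)\,\mathrm{tr}(\mathcal{W}_N^3)$ on $M \setminus \mathrm{Crit}(f)$. Combined with the corrected form of (\ref{wneq1}), this would let me treat $|\mathcal{W}_N|^2$ and $\alpha$ on equal footing and mimic the sign analysis of Lemma~\ref{holem} and the superharmonicity argument of Lemma~\ref{spha}.

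The principal obstacle I foresee is controlling the cubic-in-$\mathcal{W}_N$ term $\mathrm{tr}(\mathcal{W}_N^3)$ appearing in the contracted Codazzi identity and verifying that the resulting coupled system in $(\alpha, |\mathcal{W}_N|^2)$ still admits a clean maximum-principle argument. If this direct approach stalls, a fallback is to exploit the analyticity of $g$ and $f$ on $M \setminus B$ (Proposition~\ref{prop01}) to reduce to the open set $\{|\mathcal{W}_N|^2 > 0\}$ being either empty or open-and-dense in a component of $M \setminus B$, and then derive a contradiction from the boundary behaviour of $G$ on $B$ (Remark~\ref{rmkm}) together with the integrated constraint $\int_M (1+f)|z|^2 = 0$ and the vanishing of the mean of $f$.
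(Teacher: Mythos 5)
Your proposal is a programme rather than a proof, and it stalls at exactly the point where the paper takes a different road. The transport equation you propose to extract from the Codazzi identity $d^Dz=0$ does exist: contracting $D_Nz(E_i,E_j)=D_{E_i}z(N,E_j)$ against $\mathcal{W}_N$ and using (\ref{zdecomp1}) together with the shape operator coming from (\ref{cpe1}) gives, schematically, $N(|\mathcal{W}_N|^2)=\Phi\,|\mathcal{W}_N|^2+\Psi\,\mathrm{tr}(\mathcal{W}_N^3)$ as you predict. But $\mathrm{tr}(\mathcal{W}_N^3)$ has no sign and is controlled only by $|\mathcal{W}_N|^3$, not by $|\mathcal{W}_N|^2$, so the coupled system in $(\alpha,|\mathcal{W}_N|^2)$ does not close under a maximum-principle argument; you flag this obstacle yourself and do not resolve it. A second, unaddressed gap: the inequality $\alpha\le 0$ in the general case cannot be obtained by ``mimicking'' Lemma~\ref{lemwn2} or Lemma~\ref{holem}, since those rest on (\ref{wneqn4}) and (\ref{wneqn5}), which are valid only when $\mathcal{W}_N\equiv 0$. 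The paper must prove $\alpha\le 0$ (its Lemma~\ref{analytic}) by an entirely separate global argument: assuming a positive maximum of $\alpha$, following a geodesic to a maximum of $f$, locating the level set where $\alpha$ changes sign, and integrating $(1+f)|z|^2$ over the slab between nearby level sets to reach a sign contradiction, with a case analysis at critical points of $f$ via Lemma~\ref{lemfup0}. Your analyticity fallback is too vague to fill either gap.

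The paper's actual mechanism avoids the pointwise ODE entirely. It computes $\delta\mathcal{W}_N$ as a one-form: the tangential components vanish and $\delta\mathcal{W}_N(N)=-\frac{n-2}{n-1}\frac{1+f}{|df|}|\mathcal{W}_N|^2$, so $\delta\mathcal{W}_N$ is a multiple of $df$ (Lemma~\ref{lem2011-4-5-4}); it then shows $\delta\mathcal{W}_N$ is a \emph{closed} one-form, using the Frobenius integrability of the level sets and $D_NN=0$ (Lemma~\ref{lem2011-5-7-2}). Evaluating $\int_{M^0}\langle\delta Ddf,\delta\mathcal{W}_N\rangle$ once directly, which gives $\frac{n-2}{n-1}\int_{M^0}\left(\alpha-\frac{s}{n(n-1)}\right)(1+f)|\mathcal{W}_N|^2$, and once by parts, which gives zero because $d\delta\mathcal{W}_N=0$ and the boundary term carries a factor $1+f$ vanishing on $\partial M^0\subset f^{-1}(-1)$, forces $\mathcal{W}_N=0$ on $\{f>-1\}$ since $\alpha-\frac{s}{n(n-1)}<0$ by Lemma~\ref{analytic}; the region $\{f<-1\}$ is handled identically. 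If you want to salvage your approach, the piece to import is precisely this pair of facts --- $\delta\mathcal{W}_N$ is normal-directed with the stated magnitude, and closed --- which replaces the uncontrollable cubic term with a divergence-theorem argument.
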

We first show that $\alpha$ is nonnegative on the whole space of $M$ in the following lemma, and then prove that ${\mathcal W}_N=0$  using Lemma~\ref{lem2011-4-5-4} and Lemma~\ref{lem2011-5-7-2}.

\begin{lemma}\label{analytic}
Let $(g, f)$ be a nontrivial solution of the {\rm CPE}.
 Assume also that $(M, g)$ has harmonic curvature. Then $\alpha\leq 0$.
\end{lemma}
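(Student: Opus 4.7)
The plan is an argument by contradiction, using a maximum principle analysis of the continuous extension of $\alpha$ combined with the analytic propagation technique developed in the proof of Lemma~\ref{lemfup0}.

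Suppose $\alpha_{\max}=\max_{M}\alpha>0$, attained at some $p_{0}\in M$. If $p_{0}\in\mbox{Crit}(f)$ with $z(p_{0})=0$, the continuous extension of $\alpha$ at the end of Section~3 gives $\alpha(p_{0})=0$, contradicting positivity; if $p_{0}\in\mbox{Crit}(f)$ with $z(p_{0})\ne0$, Lemma~\ref{lemfup0} places $p_{0}$ at a local extremum of $f$, and taking the limit in (\ref{eqt3}) along the level sets collapsing to $p_{0}$ (as $|df|\to0$) yields the same first-order identity that appears in the regular case. When $p_{0}$ is regular for $f$, $d\alpha(p_{0})=0$ combined with $d\alpha=\alpha'N$ forces $\alpha'(p_{0})=0$, and substituting into the rewriting of (\ref{eqt3})
\begin{equation*}
\alpha'|df|=(1+f)|z|^{2}+\frac{s\alpha f}{n-1}
\end{equation*}
produces $(1+f(p_{0}))|z(p_{0})|^{2}=-\frac{s\alpha(p_{0})f(p_{0})}{n-1}$. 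Since $|z(p_{0})|^{2}\ge\frac{n}{n-1}\alpha(p_{0})^{2}>0$, $s>0$ and $\alpha(p_{0})>0$, a case analysis rules out $p_{0}\in B\cup\{1+f<0\}$ and forces $-1<f(p_{0})<0$; in particular $p_{0}\in M^{0}$.

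Next I would exploit the second-order condition $\tr\alpha(p_{0})\le0$. Differentiating the first-order identity along $N$ with the help of $N(|df|)=(1+f)\alpha-\frac{sf}{n(n-1)}$ (Remark~\ref{rmkdf}) and the constancy of $W=\left(\frac{n-2}{n-1}\right)^{2}|{\mathcal W}_{N}|^{2}$ on level sets (which gives $dW=N(W)\,N$), one obtains
\begin{equation*}
\tr\alpha(p_{0})=\frac{n}{n-1}\alpha^{2}(p_{0})+\frac{s\alpha(p_{0})}{n-1}+W(p_{0})+\frac{(1+f(p_{0}))\,N(W)(p_{0})}{|df(p_{0})|}.
\end{equation*}
If the Weyl contribution $W(p_{0})+(1+f(p_{0}))N(W)(p_{0})/|df(p_{0})|$ is nonnegative, the right-hand side is strictly positive, contradicting $\tr\alpha(p_{0})\le0$; this reproduces the argument of Lemma~\ref{lemwn2} with the Weyl term as a harmless additive contribution.

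The main obstacle is the remaining scenario, in which the Weyl contribution is strictly negative and large enough to offset the positive quadratic terms. My plan here is to use that $\alpha$, $|df|$ and $W$ are all constant on $L_{f(p_{0})}$ by Lemma~\ref{lem2011-4-5-1}, so that the maximum of $\alpha$ is attained on this entire hypersurface, and then follow a geodesic in $M^{0}$ joining $L_{f(p_{0})}$ to the global maximum of $f$ and apply the maximum principle of Lemma~\ref{lemg} and Remark~\ref{rmkm} to the auxiliary function $G$. Exactly as in the proof of Lemma~\ref{lemfup0}, this should force $G$ to be constant along the geodesic, hence $z\equiv0$ on an open set via (\ref{glap}), and then by the real analyticity of $g$ on $M\setminus B$ (Proposition~\ref{prop01}) on an entire connected component, contradicting the non-triviality of the CPE solution. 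Making this geodesic-propagation step rigorous in the presence of the nonzero Weyl contribution is the hard part I expect.
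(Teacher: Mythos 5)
Your first step --- the first-order condition $\langle d\alpha,df\rangle=0$ at the maximum of $\alpha$ combined with (\ref{eqt3}) and (\ref{zdecomp2}) to force $-1<f(p_0)<0$ --- is exactly the paper's Claim 1 and is fine, and your formula for $\tr\,\alpha(p_0)$ at a regular maximum point is correct. But the rest does not close. The paper deliberately avoids the second-order condition in the general harmonic-curvature case for precisely the reason you flag: when the Weyl contribution $W(p_0)+(1+f(p_0))N(W)(p_0)/|df(p_0)|$ is negative nothing is forced, and you have no control on the sign or size of $N(|{\mathcal W}_N|^2)$. Your fallback is also not valid as stated. The conclusion ``$G$ must be constant along the geodesic'' in Lemma~\ref{lemfup0} is driven by $G$ having a local \emph{minimum at both endpoints} of $\gamma$ (a critical point of $f$ with $z\neq 0$ at one end, the global maximum of $f$ at the other), which forces an interior local maximum of $G$ and triggers the strong maximum principle of Remark~\ref{rmkm}. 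At your $p_0$ one has $dG=2h\,i_{\nabla f}z=2h\alpha\,df\neq 0$, so $G$ is not at a local minimum there and no interior maximum of $G$ along $\gamma$ is produced by that two-endpoint argument.

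The missing idea is the paper's sign-change analysis. Since $\alpha(p_0)>0$ while $\alpha<0$ near every maximum point of $f$ (by the boundary-integral argument in the proof of Lemma~\ref{lemfup0}), along a geodesic in $M^0$ from $p_0$ to a maximum point of $f$ there is a point $x_2$ at which $\alpha$ changes sign from positive to negative. If $|df|(x_2)\neq 0$, one integrates $(1+f)|z|^2=-\delta(i_{\nabla f}z)$ over the collar $\{f(x_2)-\epsilon'<f<f(x_2)\}$ and uses $i_{\nabla f}z=\alpha\,df$ together with $\alpha=0$ on $L_{f(x_2)}$ to get
$0<\int(1+f)|z|^2=-\int_{L_{f(x_2)-\epsilon'}}\alpha\,|df|<0$,
a contradiction (positivity of the left side coming from analyticity of $z$ on $M^0$). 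If $|df|(x_2)=0$ one must additionally rule out that $x_2$ is a local extremum of $f$ (using the constancy of $\alpha$ on level sets), invoke Lemma~\ref{lemfup0} to conclude $z(x_2)=0$, and repeat the collar integration on the appropriate connected components of $f^{-1}(f(x_2))$. This divergence-theorem step on level-set collars --- not the Laplacian of $\alpha$ and not the two-endpoint $G$-argument --- is what actually produces the contradiction, and it is exactly the part you identify as ``the hard part'' but do not supply.
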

\begin{proof}
Suppose that $\alpha(x_0)=\max_{x\in M} \alpha(x)>0$.
\vskip .5pc
{\bf Claim 1.} We have $-1<f(x_0)<0$. \par
\begin{proof}[of Claim 1]
At $x_0$, we have by (\ref{eqt3})
\be \label{criticalalpha} (1+f)|z|^2=-\frac {sf}{n-1}\, \alpha.\ee
Thus $f(x_0)\neq 1$. Also $f(x_0)\neq 0$, otherwise $|z|^2(x_0)=0$ implying that $\alpha(x_0)=0$, a contradiction.
Moreover,  at $x_0$ by (\ref{criticalalpha})
$$ 0< \frac s{n-1}\, \alpha = -\frac {1+f}{f}\, |z|^2, $$
which implies our claim is true.
\end{proof}

\begin{figure}[htb]
\centering{\scalebox{0.6}{\includegraphics{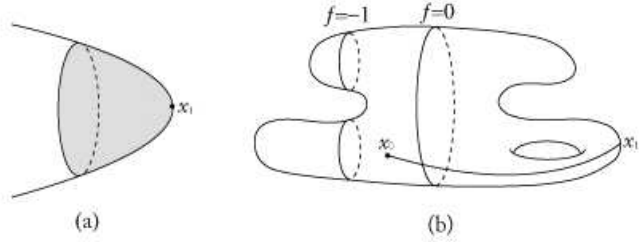}} \caption{Near a critical point}
\label{figure1-1}}
\end{figure}

Now we are ready to derive our contradiction.
Choose a maximum point $x_1$of $f$ such that a geodesic $\gamma$ from $x_0$ to $x_1$ lies entirely in $M^0$ (see Fig 1. (b)). It is easy to see that  $\alpha <0$ near every maximum points of $f$ as in the proof of Lemma~\ref{lemfup0}.
 Thus there exist points in $\gamma$ such that $\alpha=0$ at those points.  Among those points let $x_2$ be the point at which $\alpha$ changes sign.
In other words, $\alpha(x_2)=0$ and $\alpha>0$ before the point $x_2$ and $\alpha<0$ after $x_2$ along  the geodesic $\gamma$ from $x_0$ to $x_1$. To obtain a contradiction, we need to look into two cases, $|df|(x_2)\neq 0$ and $|df|(x_2)= 0$.
\vskip .5pc
{\it Case I.} $|df|(x_2)\neq 0$.
\vskip .5pc
Consider $L_{f(x_2)}$ with $x_2\in L_{f(x_2)}$. Then, since $|df|(x_2)\neq 0$,
there is a connected subset $\Omega_{\epsilon'}$ of $M$ for a sufficiently small $\epsilon'>0$ such that $f(x_2)-\epsilon'<f(x)<f(x_2)$ for every $x\in \Omega_{\epsilon'}$ with smooth boundaries $\partial \Omega_{\epsilon'}=L_{f(x_2)-\epsilon'}\cup L_{f(x_2)}$. Since
\bea 0<\int_{\Omega_{\epsilon'}}(1+f)|z|^2 &=&\int_{L_{f(x_2)}}\alpha |df| -\int_{L_{f(x_2)-\epsilon'}}\alpha |df| \\
&=& -\int_{L_{f(x_2)-\epsilon'}}\alpha |df|  <0,\eea
which is the desired contradiction.
\vskip .5pc
{\it Case II.} $|df|(x_2)=0$.
\vskip .5pc
Note that the critical point $x_2$ cannot be a local maximum point of $f$; otherwise
on the small connected neighborhood $\Omega^{\epsilon}$ of $x_2$ given by $\Omega^{\epsilon}=\{x\in M\, \vert\, f(x)> f(x_2)-\epsilon\}$ with a sufficiently small $\epsilon$ (see Fig. 1 (a) where $x_2$ replaces $x_1$),
$\alpha$ has to be positive and negative at the same time on $\partial \Omega^{\epsilon}$, which is impossible since $\alpha$ is constant on  $\partial \Omega^{\epsilon}=L_{f(x_2)-\epsilon}$.
 Similarly, $x_2$ cannot be a local minimum point of $f$. Thus by Lemma~\ref{lemfup0}, $z(x_2)=0$ and $f$ is increasing nearby $x_2$ along $\gamma$ from $x_0$ to $x_1$.

If one can find a connected subset $\Omega_{\epsilon}$ of $M$ such that $\partial \Omega_{\epsilon}$ is a union of $L_{f(x_2)-\epsilon}$ and $L_{f(x_2)}$ with $x_2\in L_{f(x_2)}$ as in the proof  of Case I,  we can obtain the desired contradiction. If that is not possible, we then consider the connected hypersurface components $L_{f(x_2)}^i$ of $f^{-1}(f(x_2))$ containing $x_2$ with $i=1,\dots, k$. Note that  $\alpha=0$ on $L_{f(x_2)}^i$. Then there exists a connected set $\tilde{\Omega}_{\epsilon}$ with a sufficiently small $\epsilon$ such that $f(x_2)-\epsilon <f(x)<f(x_2)$ for $x\in  \tilde{\Omega}_{\epsilon}$ and either $\partial \tilde{\Omega}_{\epsilon} =L_{f(x_2)-\epsilon}^j\cup L_{f(x_2)}^j$ for some $j$ (see Fig. 2 (a)), or  $\partial \tilde{\Omega}_{\epsilon} =L_{f(x_2)-\epsilon}\cup (\cup_{i=1}^kL_{f(x_2)}^i)$ (see Fig. 2 (b)). Then
$$ 0< \int_{ \tilde{\Omega}_{\epsilon}} (1+f)|z|^2 = -\int_{L_{f(x_2)-\epsilon}^j}\alpha |df| \quad \mbox{or}\quad  -\int_{L_{f(x_2)-\epsilon}}\alpha |df|,$$
implying that $\alpha <0$ on $L_{f(x_2)-\epsilon}^j$ or $L_{f(x_2)-\epsilon}$, which are both impossible by the definition of the point $x_2$.\par
\begin{figure}[htb]
\centering{\scalebox{0.6}{\includegraphics{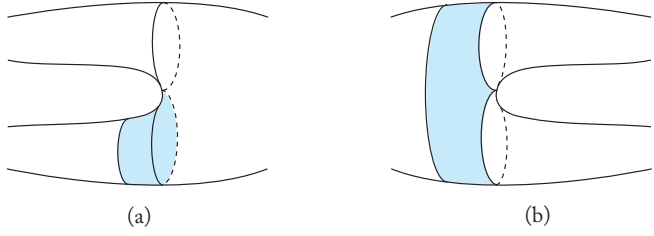}}
\caption{Near a critical point}
\label{figure1-2}}
\end{figure}
The contradictions derived in cases $|df|(x_2)\neq 0$ or $|df|(x_2)=0$ complete the proof of our Lemma.
\end{proof}

Note that ${\mathcal W}_N$ is also continuously well defined on all of $M$ as $\alpha$; if $z(x_0)=0$ for $x_0\in \mbox{\rm Crit}(f)$, ${\mathcal W}_N$ can be defined as zero since $|{\mathcal W}_N|\leq \frac {n-1}{n-2}|z|$, and  if $z(x_1)\neq 0$, $x_1$ is not a critical point of $f$ unless it is a local maximum or minimum point. The differentiation of ${\mathcal W}_N$ on ${\rm {Crit}}(f)$ can also be considered  in the distribution sense.
We can compute the divergence of ${\mathcal W}_N$ as follows.
\begin{lemma}\label{lem2011-4-5-4}
Let $\{E_i\}_{1\leq i \leq n}$ be an orthonormal frame field with $E_n=N$. Then we have
$$
\delta {\mathcal W}_N(E_i) = 0 \quad \mbox{and} \quad \delta {\mathcal W}_N(N) = - \frac{n-2}{n-1} \, \frac{1+f}{|df|} \, |{\mathcal W}_N|^2.
$$
Thus, on $M\setminus \mbox{\rm Crit}(f)$
\be
\delta {\mathcal W}_N = {\delta {\mathcal W}_N(N)} N = \frac{\delta {\mathcal W}_N(N)}{|df|} df. \label{eqn2011-4-6-1}
\ee
\end{lemma}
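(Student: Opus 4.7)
The plan is to compute $\delta\mathcal{W}_N$ by exploiting the fact that the traceless Ricci tensor $z$ is itself divergence-free and admits an explicit $\alpha$/$\mathcal{W}_N$ decomposition, so that applying $\delta$ to that decomposition forces the claimed formula on the lone remaining term $\mathcal{W}_N$. Concretely, harmonic curvature combined with constant scalar curvature gives $d^D r_g=0$ and $\delta r_g=-\tfrac12 ds_g=0$, hence $\delta z=0$.

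First I would promote $\mathcal{W}_N$ to a symmetric $2$-tensor on $M\setminus\mathrm{Crit}(f)$ by declaring $\mathcal{W}_N(N,\cdot)=0$ (this is consistent with the Weyl symmetries $\mathcal{W}(\cdot,N,N,N)=0$). Combining Lemma~\ref{lem5} with $z(N,N)=\alpha$ and Lemma~\ref{lem4} (which gives $z(X,N)=0$ for $X$ tangent to the level set) yields the global form
\[
z \;=\; -\frac{\alpha}{n-1}\,g \;+\; \frac{n\alpha}{n-1}\,N^\flat\!\otimes N^\flat \;-\; \frac{n-2}{n-1}\,\mathcal{W}_N .
\]
Next I would take the codifferential. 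A direct calculation gives $\delta(\alpha g)=-d\alpha$, while $D_NN=0$ together with Lemma~\ref{lem2011-4-5-1} (so that $d\alpha=\alpha' N^\flat$) yields $\delta(\alpha N^\flat\!\otimes N^\flat)=(\alpha\,\delta N-\alpha')N^\flat$. Substituting into $\delta z=0$ produces
\[
\frac{n-2}{n-1}\,\delta\mathcal{W}_N \;=\; \Bigl[-\alpha'+\frac{n\alpha}{n-1}\,\delta N\Bigr]\,N^\flat .
\]
This instantly gives $\delta\mathcal{W}_N(E_i)=0$ for every $E_i$ tangent to the level set and exhibits $\delta\mathcal{W}_N$ as proportional to $df$, which is the second identity of the lemma.

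For the $N$-component I would use two facts derived earlier in the paper without any assumption on $\mathcal{W}_N$: the identity $\delta N=\frac{1}{|df|}[(1+f)\alpha+\frac{sf}{n}]$ from the proof of Lemma~\ref{lem2011-4-5-1} (with the same summation $\sum_{i=1}^{n-1} z(E_i,E_i)=-\alpha$ used there), and (\ref{eqt3}) in the form $\alpha'|df|=(1+f)|z|^2+\frac{s\alpha f}{n-1}$. A short algebraic combination of these gives
\[
n\alpha\,\delta N-(n-1)\alpha' \;=\; \frac{1+f}{|df|}\bigl[\,n\alpha^2-(n-1)|z|^2\,\bigr],
\]
and plugging in (\ref{zdecomp2}) reduces the bracket to $-\tfrac{(n-2)^2}{n-1}|\mathcal{W}_N|^2$. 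Dividing by $\tfrac{n-2}{n-1}$ then produces exactly $\delta\mathcal{W}_N(N)=-\tfrac{n-2}{n-1}\,\tfrac{1+f}{|df|}|\mathcal{W}_N|^2$.

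The main obstacle is the bookkeeping in the codifferential step: one must apply the product rule through the scalar $\alpha$ in each summand, invoke $D_NN=0$ to kill the $\sum N^\flat(e_i)(D_{e_i}N^\flat)$ term, and invoke $d\alpha=\alpha' N^\flat$ at the right moment, so that \emph{every} contribution is automatically a multiple of $N^\flat$ before one ever touches $\delta\mathcal{W}_N$. Once that is checked, identifying the coefficient is a purely algebraic manipulation of (\ref{eqt3}), the expression for $\delta N$, and (\ref{zdecomp2}).
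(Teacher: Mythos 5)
Your argument is correct, but it reaches the lemma by a genuinely different route than the paper. The paper works directly from $\delta\mathcal{W}=0$: it expands $\delta\mathcal{W}(N,X,N)$, uses equation (\ref{eqnn02}) together with Lemma~\ref{lem4} to show that every term involving $D_{E_i}N$ vanishes (giving $\delta\mathcal{W}_N(E_i)=0$), and then computes $\delta\mathcal{W}_N(N)$ by diagonalizing $\mathcal{W}_N$ and substituting the explicit eigenvalues of $Ddf$ coming from the CPE. You instead observe that constant scalar curvature already gives $\delta z=0$, assemble Lemmas~\ref{lem4} and~\ref{lem5} into the tensor identity $z=-\frac{\alpha}{n-1}g+\frac{n\alpha}{n-1}N^\flat\otimes N^\flat-\frac{n-2}{n-1}\mathcal{W}_N$, and read off $\delta\mathcal{W}_N$ from the codifferential of the other two summands; this makes the proportionality to $N^\flat$ automatic and replaces the diagonalization by the purely scalar identity $n\alpha\,\delta N-(n-1)\alpha'=\frac{1+f}{|df|}\bigl(n\alpha^2-(n-1)|z|^2\bigr)$ combined with (\ref{zdecomp2}). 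I checked the codifferential bookkeeping: $\delta(\alpha g)=-d\alpha$, and $\delta(\alpha N^\flat\otimes N^\flat)=(\alpha\,\delta N-\alpha')N^\flat$ does follow from $D_NN=0$ and $d\alpha=\alpha'N^\flat$, and the final coefficient comes out to exactly $-\frac{n-2}{n-1}\frac{1+f}{|df|}|\mathcal{W}_N|^2$. Two small remarks: the formula $\delta N=\frac{1}{|df|}\bigl((1+f)\alpha+\frac{sf}{n}\bigr)$ that you invoke is established in the proof of Lemma~\ref{lemwn1}, not Lemma~\ref{lem2011-4-5-1}, though its derivation there uses only the CPE, $D_NN=0$ and $\mathrm{tr}\,z=0$ and so is valid without the hypothesis $\mathcal{W}_N\equiv 0$; and the extension $\mathcal{W}_N(N,\cdot)=0$ is automatic from the antisymmetry of $\mathcal{W}$ in its first two arguments rather than a choice. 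Your version buys a shorter computation at the cost of relying on the decomposition of $z$ (itself a consequence of harmonic curvature via (\ref{eqnn02})), whereas the paper's version extracts slightly more local information along the way; both are sound.
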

\begin{proof}
Since $\delta {\mathcal W}=0$, for $X=E_j$
\begin{eqnarray*}
0&=&\delta {\mathcal W}(N,X,N)=-\sum_i D_{E_i}{\mathcal W}(E_i,N,X,N)\\
&=&-\sum_i E_i({\mathcal W}(E_i,N,X,N))+\sum_i [ {\mathcal W}(D_{E_i}E_i,N,X,N) \\
& &+ {\mathcal W}(E_i, D_{E_i}N, X,N)+{\mathcal W}(E_i,N,D_{E_i}X,N)+ {\mathcal W}(E_i,N,X,D_{E_i}N) ].
\end{eqnarray*}
Thus
\begin{eqnarray*}
\delta {\mathcal W}_N(X)&=& -\sum_i D_{E_i}{\mathcal W}_N(E_i,X)\\
&=& -\sum_i E_i({\mathcal W}_N(E_i,X)) +\sum_i ({\mathcal W}_N(D_{E_i}E_i,X)+{\mathcal W}_N(E_i,D_{E_i}X))\\
&=& -\sum_i ({\mathcal W}(E_i, D_{E_i}N, X,N)+ {\mathcal W}(E_i,N,X,D_{E_i}N)).
\end{eqnarray*}
Thus, by (\ref{eqnn02})
\bea (n-2){\mathcal W}(E_i, D_{E_i}N, X,\nabla f) &=& (n-1)df(E_i)z(D_{E_i}N,X)-df(D_{E_i}N)z(E_i,X)\\
& & +z(\nabla f, E_i)g(D_{E_i}N,X)-z(\nabla f, D_{E_i}N)g(E_i,X).
\eea
Note that
$df(E_i)=0$, $$df(D_{E_i}N)=\langle D_{E_i}N,\nabla f\rangle =\frac 1{|df|}(1+f)z(E_i,\nabla f)=0,$$ and $z(\nabla f,E_i)=0$ by Lemma~\ref{lem4}, and finally
$z(\nabla f, D_{E_i}N)= 0$, from the facts that $|df|$ is constant on each level sets of $f$, and
\be
D_{E_i}N = \sum_{j=1}^{n-1} \langle D_{E_i}N, E_j\rangle E_j
= \frac{1}{|df|}\sum_{j=1}^{n-1} \langle D_{E_i}df, E_j\rangle E_j.\label{eqn2011-4-5-3-1}\ee
Therefore ${\mathcal W}(E_i, D_{E_i}N, X,N)=\frac 1{|df|}{\mathcal W}(E_i,D_{E_i}N, X, \nabla f)=0$. Similarly, we have ${\mathcal W}(X,D_{E_i}N,E_i,N)=0$. Hence we may conclude that $\delta {\mathcal W}_N(X)=0$.

 It still remains to show the second identity is correct.
Since $z(N, E_i) = 0$ and ${\mathcal W}_N(N, \cdot) = 0$, we may assume that $E_1, \dots, E_{n-1}$ diagonalize ${\mathcal W}_N$
at a  point $p$. Then, at $p$
\bea
\delta {\mathcal W}_N(N) &=&
-\sum_{i=1}^n D_{E_i}{\mathcal W}_N(E_i, N) = \sum_{i=1}^{n-1} {\mathcal W}_N(E_i, D_{E_i}N)\\
&=&
\sum_{i=1}^{n-1} \langle D_{E_i}N, E_i\rangle {\mathcal W}_N(E_i, E_i).
\eea
Here we used the fact that ${\mathcal W}_N(D_{E_i}E_i,N)=0$.
So, from the CPE,
\bea
\langle D_{E_i}N, E_i\rangle &=&  \frac{1}{|df|} \langle D_{E_i}df, E_i\rangle \nonumber \\
&=&
 \frac{1}{|df|} \left((1+f)z(E_i, E_i) - \frac{sf}{n(n-1)}\right)\nonumber\\
 &=&
 -\frac{1}{(n-1)|df|}\left(\frac{sf}{n} +  (1+f)\a +(n-2)(1+f){\mathcal W}_N(E_i, E_i)\right).\label{eqn2011-5-8-1}
\eea
Hence
\bea
\delta {\mathcal W}_N(N) &=& \sum_{i=1}^{n-1} \langle D_{E_i}N, E_i\rangle {\mathcal W}_N(E_i, E_i) \\
&=&
-\frac{1}{(n-1)|df|}\left(\frac{sf}{n} +  (1+f)\a\right)\sum_{i=1}^{n-1}  {\mathcal W}_N(E_i, E_i) \\
\quad &&- \frac{n-2}{n-1} \frac{1+f}{|df|}\, \sum_{i=1}^{n-1}  {\mathcal W}_N(E_i, E_i)^2\\
 &=& - \frac{n-2}{n-1} \, \frac{1+f}{|df|} \, |{\mathcal W}_N|^2,
\eea
where we used the fact that tr ${\mathcal W}_N=0$ in the last equation.
\end{proof}

\begin{lemma}\label{lem2011-5-7-2}
The differential form $\delta {\mathcal W}_N $ is a closed $1$-form on $M\setminus \mbox{\rm Crit}(f)$.
\end{lemma}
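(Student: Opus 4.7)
The plan is to exploit the explicit formula from Lemma~\ref{lem2011-4-5-4}, namely
\[
\delta {\mathcal W}_N \;=\; \phi\, df, \qquad \phi \;=\; \frac{\delta {\mathcal W}_N(N)}{|df|} \;=\; -\frac{n-2}{n-1}\,\frac{(1+f)|{\mathcal W}_N|^2}{|df|^2}.
\]
Since $d(\phi\, df)=d\phi\wedge df$, it suffices to show that $d\phi$ is proportional to $df$ on $M\setminus\mbox{\rm Crit}(f)$, or equivalently that $\phi$ is constant on each connected component of every regular level set of $f$.

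To see this, I would run through the three factors appearing in $\phi$ and invoke the earlier constancy results on $L_c$. The function $f$ itself is of course constant on $L_c$, and Lemma~\ref{lem2011-4-5-1} gives the constancy of $|df|$ on $L_c$. For the factor $|{\mathcal W}_N|^2$, I would recall the orthogonal decomposition
\[
|z|^2 \;=\; \frac{n}{n-1}\,\alpha^2 + \left(\frac{n-2}{n-1}\right)^{\!2}|{\mathcal W}_N|^2
\]
from Lemma~\ref{lem5}, together with equation (\ref{eqt3})
\[
(1+f)|z|^2 \;=\; -\frac{s_g}{n-1}\,\alpha f + \langle d\alpha, df\rangle.
\]
Lemma~\ref{lem2011-4-5-1} asserts that $\alpha$, $|df|$, and $\langle d\alpha, df\rangle$ are constant on $L_c$, so the right-hand side of (\ref{eqt3}) is constant on $L_c$; dividing by the constant $1+f$ shows $|z|^2$ is constant on $L_c$, and then the decomposition forces $|{\mathcal W}_N|^2$ to be constant on $L_c$ as well (this is precisely the observation recorded just after Lemma~\ref{lem5}).

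Combining these three facts, $\phi$ itself is constant on each connected component $L_c$ of a regular level set. Consequently, for any tangent vector $X$ to such a level set one has $X(\phi)=0$, so $d\phi$ is pointwise a multiple of $df$; hence $d\phi\wedge df=0$ and
\[
d(\delta{\mathcal W}_N) \;=\; d(\phi\, df) \;=\; d\phi\wedge df \;=\; 0
\]
on $M\setminus\mbox{\rm Crit}(f)$. I do not anticipate a serious obstacle here: all the ingredients are already in place, and the argument is essentially a bookkeeping exercise showing that $\delta{\mathcal W}_N$ is a scalar multiple of the exact form $df$ by a function that depends only on $f$. The only delicate point is that the constancy of $|{\mathcal W}_N|^2$ on $L_c$ requires the full chain via (\ref{eqt3}) and (\ref{zdecomp2}) rather than being immediate; once that is noted, closedness is automatic.
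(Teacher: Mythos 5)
Your argument is correct, and it reaches the conclusion by a genuinely shorter route than the paper's. The paper checks closedness componentwise in the adapted frame: for the tangential components $d\delta{\mathcal W}_N(E_i,E_j)$ it first shows, via the formula for $D_{E_i}N$, that the level sets of $f$ form an integrable distribution, so that $[E_i,E_j]$ remains tangential and is annihilated by $\delta{\mathcal W}_N$; for the mixed components $d\delta{\mathcal W}_N(N,E_i)$ it carries out an explicit bracket computation using $D_NN=0$ and equation (\ref{eqn2011-4-5-5}). You bypass both steps by reading off from Lemma~\ref{lem2011-4-5-4} that $\delta{\mathcal W}_N=\phi\,df$ for a single scalar $\phi$, so that $d\delta{\mathcal W}_N=d\phi\wedge df$, and the only remaining issue is that $\phi$ has no derivative tangent to the level sets --- which is exactly the constancy of $f$, $|df|$ and $|{\mathcal W}_N|^2$ on $L_c$, obtained as you say from Lemma~\ref{lem2011-4-5-1} together with (\ref{eqt3}) and (\ref{zdecomp2}). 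The two proofs consume the same inputs; yours trades the Frobenius and bracket bookkeeping for the algebraic identity $d(\phi\,df)=d\phi\wedge df$, which is a real simplification, while the paper's frame computations have the side benefit of producing formula (\ref{eqn2011-4-5-5}), which it reuses. One small caveat, shared with the paper's own proof: Lemma~\ref{lem2011-4-5-1} is stated for regular values, and your extraction of the constancy of $|z|^2$ from (\ref{eqt3}) divides by $1+f$, which degenerates on the regular part of $f^{-1}(-1)$; at such points one should observe that the local computations behind Lemma~\ref{lem2011-4-5-1} are valid wherever $df\neq 0$, and appeal to continuity of $|z|^2$ across the level $f=-1$. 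Neither point affects the substance of your argument.
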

\begin{proof}
Note that from (\ref{eqn2011-4-5-3-1})
\bea
D_{E_i}N = - \frac{1}{(n-1)|df|} \left(\frac{sf}{n}+(1+f)\a +(n-2)(1+f){\mathcal W}_N(E_i, E_i)\right)E_i\label{eqn2011-4-5-5}
\eea
for $i=1, \dots, n-1$. Since ${\mathcal W}_N(E_i, E_i) = -\frac{n-1}{n-2}\left(\frac{\a}{n-1} + z(E_i, E_i)\right)$ by (\ref{zdecomp1}), we obtain
\be
D_{E_i}N = \frac{1}{|df|}\left((1+f)z(E_i, E_i) - \frac{sf}{n(n-1)}\right)E_i.\label{eqn2011-5-7-1}
\ee
In particular, if $i \ne j$,
$$
\langle D_{E_i}E_j, N \rangle = -\langle E_j, D_{E_i}N\rangle = 0
$$
and thus
$$
\langle [E_i, E_j], N\rangle=\langle D_{E_i}E_j - D_{E_j}E_i, N\rangle  =  0.
$$
Hence for a regular value $c$ of $f$, the level set $f^{-1}(c)$ is an integrable hypersurface in $M$.
Since $\delta {\mathcal W}_N(E_i) = 0$ for $1\le  i \le n-1$ by Lemma~\ref{lem2011-4-5-4}, it follows from the Frobenius theorem that
$$
d\delta {\mathcal W}_N(E_i, E_j) = 0 \quad\mbox{for}\quad 1\le i, j \le n-1.
$$
Next, by Lemma~\ref{lem2011-4-5-4} and the fact that $f, |df|$ and $|{\mathcal W}_N|^2$ are all constant on each level sets of $f$, we obtain
\bea
d\delta {\mathcal W}_N(N, E_i)
&=& N(\delta {\mathcal W}_N(E_i)) - E_i(\delta {\mathcal W}_N(N)) - \delta {\mathcal W}_N([N, E_i])\\
&=& - \delta {\mathcal W}_N([N, E_i])  = \delta {\mathcal W}_N(D_{E_i}N - D_N E_i).
\eea
By equation (\ref{eqn2011-4-5-5}) and Lemma~\ref{lem2011-4-5-4},
$\delta {\mathcal W}_N(D_{E_i}N) = 0$
and
\bea
\delta {\mathcal W}_N(D_N E_i) &=& \langle D_N E_i, N\rangle \delta {\mathcal W}_N(N)\\
&=& - \langle  E_i, D_N N\rangle \delta {\mathcal W}_N(N) =0
\eea
since $D_NN = 0$. Thus, we have $d\delta {\mathcal W}_N(N, E_i) = 0$ and consequently $\delta {\mathcal W}_N$ is a closed $1$-form on $M$.
This proves our lemma.\end{proof}

By Lemma~\ref{lem2011-5-7-2} and Proposition~\ref{prop01}, therefore, $d\delta {\mathcal W}_N =0$ almost everywhere.
Now we are ready to prove Theorem~\ref{thmzero}.

Considering $\delta {\mathcal W}_N$ as a vector, it follows from Lemma~\ref{lem2011-4-5-4} that
\bea
\langle \delta Ddf, \delta {\mathcal W}_N\rangle
&=&
\langle -d\Delta f - r(df, \cdot), \delta {\mathcal W}_N\rangle\\
&=&
\frac{s}{n-1}\langle df, \delta {\mathcal W}_N\rangle - r(df, \delta {\mathcal W}_N)\\
&=&
\frac{s}{n-1}\delta {\mathcal W}_N(N)|df| - |df| \delta {\mathcal W}_N(N) r(N, N)\\
&=&
\frac{s}{n-1}\delta {\mathcal W}_N(N)|df| - |df| \delta {\mathcal W}_N(N) \left(\a+\frac{s}{n}\right) \\
&=&
a \left(\a - \frac{s}{n(n-1)}\right) (1+f) |{\mathcal W}_N|^2,
\eea
where $a = \frac{n-2}{n-1}$. Integrating this over $M^0$,
 we have
$$
\int_{M^0} \langle \delta Ddf, \delta {\mathcal W}_N\rangle
 = a\int_{M^0} \left(\a - \frac{s}{n(n-1)}\right) (1+f) |{\mathcal W}_N|^2.
 $$
On the other hand, by Lemma~\ref{lem2011-4-5-4} and Lemma~\ref{lem2011-5-7-2} and the divergence theorem,
\bea
\int_{M^0} \langle \delta Ddf, \delta {\mathcal W}_N\rangle
&=&
\int_{M^0} \langle  Ddf, d\delta {\mathcal W}_N\rangle -\int_{\partial M^0} Ddf(\delta {\mathcal W}_N, N)\\
&=&
-\int_{\partial M^0} \delta {\mathcal W}_N(N) Ddf(N, N)\\
&=&
a\int_{\partial M^0} \frac{1+f}{|df|} |{\mathcal W}_N|^2  Ddf(N, N)=
0,
\eea
where the second equation follows from the fact that $d\delta {\mathcal W}_N=0$ almost everywhere, and
the last equation follows from the definition of $\partial M^0\subset f^{-1}(-1)$.
Thus
$$
\int_{M^0} \left(\a - \frac{s}{n(n-1)}\right) (1+f) |{\mathcal W}_N|^2 = 0.
$$
Since $\a - \frac{s}{n(n-1)} <0$ by Lemma~\ref{analytic}, we may conclude that ${\mathcal W}_N = 0$ on the set $f>-1$.
Integrating the same integrand over $f < -1$, we have ${\mathcal W}_N = 0$ on the whole space $M$. This completes the proof of Theorem~\ref{thmzero}.

\end{document}